\newif\ifIMRN %variable that tells whether want IMRN style or ``regular'' formatting False by default
\newtheorem{thm}{Theorem}[section]
\newtheorem{lem}[thm]{Lemma}
\newtheorem{prop}[thm]{Proposition}
\theoremstyle{definition}
\newtheorem{defn}[thm]{Definition}
\theoremstyle{remark}
\newtheorem{rem}[thm]{Remark}
\numberwithin{equation}{section}
\newcommand{\set}[1]{\left\{#1\right\}}
\newcommand{\Real}{\mathbb R}
\newcommand{\dist}[0]{\mathrm{dist}}
\newcommand{\xX}[0]{\mathbf{x}}
\newcommand{\FF}[0]{\mathbf{F}}
\newcommand{\yY}[0]{\mathbf{y}}
\newcommand{\nN}[0]{\mathbf{n}}
\newcommand{\eE}[0]{\mathbf{e}}
\newcommand{\vV}[0]{\mathbf{v}}
\newcommand{\wW}[0]{\mathbf{w}}
\title{Some Singular Limit Laminations of Embedded Minimal Planar Domains}
\subjclass[2000]{53A10}
\author{Jacob Bernstein}
\address{Dept. of Math,
Stanford University, Stanford, CA 94305, USA}
\email{jbern@math.stanford.edu}
\thanks{The author was supported by the NSF grant DMS-0902721.}
\begin{document}
\ifIMRN
\title{Some Singular Limit Laminations of Embedded Minimal Planar Domains}
\shorttitle{Singular Limit Laminations}

% Enter the publication year and the ID number of the paper
\volumeyear{}
\paperID{}

% Author name(s)
\author{Jacob Bernstein}
% Abbreviated author name for running headers
\abbrevauthor{J. Bernstein}
% Abbreviated author name for first page header
\headabbrevauthor{Bernstein, J.}

\address{%
\affilnum{1}Dept. of Math,
Stanford University, Stanford, CA 94305, USA}

% Address / e-mail address of corresponding author
\correspdetails{jbern@math.stanford.edu}

\received{}
\revised{}
\accepted{}

% Enter details of editor communicating this article
\communicated{}
\else
\fi

\begin{abstract}
 In this paper we give two examples of sequences of embedded minimal planar domains in $\Real^3$ which converge to singular laminations of $\Real^3$. In contrast with the situation for embedded minimal disks, these examples do not arise from complete embedded minimal planar domains and highlight some of the subtleties inherent in understanding refined properties of embedded minimal planar domains.
\end{abstract}

\maketitle
\ifIMRN
\setcitestyle{numbers}
\fi
\section{Introduction}

In \cite{CM4}, T. H. Colding and W. P. Minicozzi prove a striking compactness result for sequences of embedded minimal disks in $\Real^3$. Specifically, they show that if $\Sigma_i$ is a sequence of embedded minimal disks with $\partial \Sigma_i\subset \partial B_{R_i}$ and $R_i\to \infty$ then, up to passing to a subsequence, the $\Sigma_i$ converge to a smooth minimal lamination $\mathcal{L}$ of $\Real^3$.  A lamination is a foliation which need not fill space and is minimal when each leaf is a minimal surface.  The convergence is smooth away from a closed set $\mathcal{S}$ and if $\mathcal{S}\neq \emptyset$ then $\mathcal{L}$ consists of a foliation of $\Real^3$ by parallel planes and and $\mathcal{S}$ is a single Lipschitz curve transverse to the leaves of $\mathcal{L}$. A consequence of the uniqueness of the helicoid--see \cite{MR}--is that the leaves of $\mathcal{L}$ are either planes or helicoids and if $\mathcal{S}\neq\emptyset$ then it is a straight line orthogonal to the planes.  If $R_i\to R<\infty$, much wilder laminations (of $B_R$) may occur in the limit--see \cite{CMPVNP,BDean,Khan,Kleene,HoffmanWhite1}.

Colding and Minicozzi extended their compactness theory to sequences of embedded minimal planar domains in \cite{CM5}--recall a planar domain is a surface without genus. Namely, if  $\Sigma_i$ is a sequence of embedded minimal planar domains with $\partial \Sigma_i\subset \partial B_{R_i}$ and $R_i\to \infty$ then, up to passing to a subsequence, the $\Sigma_i$ converge to a minimal lamination $\mathcal{L}$. Again the convergence is smooth away form a closed set $\mathcal{S}$. If $\mathcal{S}\neq \emptyset$ then $\mathcal{L}$ consists of a lamination of $\Real^3$ by parallel planes. In contrast with the situation for disks, if $\mathcal{S}\neq \emptyset$ then $\mathcal{L}$ need not foliate $\Real^3$ and it is not known whether the singular set, $\mathcal{S}$, has any additional structure. Stronger results are obtained in \cite{CM5} if the $\Sigma_i$ are assumed to be simply-connected on a uniform scale.  

Let us illustrate some possible singular limit laminations--i.e. limits where $\mathcal{S}\neq \emptyset$--arising from complete embedded minimal planar domains. First of all, the homothetic blow-down of a catenoid converges with multiplicity two to a single plane so $\mathcal{L}$ is a single plane and $\mathcal{S}$ consists of a single point. Degenerations and homothetic blow-downs of Riemann's family of minimal surfaces gives rise to a variety of limits. In all cases $\mathcal{L}$ consists of a foliation of $\Real^3$ by parallel planes.  However, depending on the choices $\mathcal{S}$ may be one of the following:  two distinct lines orthogonal to the leaves of $\mathcal{L}$; a single line either making a positive angle with each leaf of $\mathcal{L}$ or contained in a single leaf; a periodic set of equally spaced points along a line contained in a single leaf of $\mathcal{L}$; or a single point.  It bears mentioning, that the case when  $\mathcal{S}$ consists of two distinct lines can be distinguished from the other examples by the nature of the convergence of the sequence towards $\mathcal{L}$. Specifically, in this case near a point of $\mathcal{S}$ the convergence is modeled on the helicoid i.e. away from $\mathcal{S}$ the $\Sigma_i$ look like the union of two multi-valued graphs spiraling together--while in the other examples the convergence near the singular set is modeled on the catenoid--i.e. away from $\mathcal{S}$ the $\Sigma_i$ look like the union of single-valued graphs.
In this paper we present two sequences of embedded minimal planar domains which converge to singular laminations that do not arise from complete embedded surfaces. They illustrate some of the difficulties one must overcome if one wishes to refine Colding and Minicozzi's work.

\begin{thm} \label{ZigZagThm}
There is a sequence of minimal planar domains $\Sigma_i$ with $\partial \Sigma_i\subset \partial B_{R_i}$ where $R_i\to \infty$ so that \begin{enumerate}
 \item \label{ZZitem1}$\Sigma_i$ converges in $C^\infty_{loc}(\Real^3\backslash \mathcal{S})$ to a foliation $\mathcal{L}$ of $\Real^3$ by planes parallel to the $x_3$-axis.  Here $\mathcal{S}=\mathcal{S}^-\cup \mathcal{S}^+$ is the union of two distinct lines, $\mathcal{S}^\pm$ each parallel to the $x_3$-axis and at distance $1$ from it;
 \item \label{ZZitem2} For $\epsilon>0$, and $i$ sufficiently large, $\Sigma_i\cap B_{R_i} \backslash T_\epsilon(\mathcal{S})$ consists of the union of single valued graphs over the plane $\set{x_3=0}$;
\item \label{ZZitem3} For $R>1,\delta>0$ and points $p_i^\pm\in \Sigma_i\cap B_R$ with $\delta<|x_3(p_i^+)-x_3(p_i^-)|$,  $p_i^-$ and $p_i^+$ lie in the same connected component of $ \Sigma_i\cap B_{2R}$ and  $\dist^{\Sigma_i}(p_i^-,p_i^+)\to \infty$.  That is the intrinsic distance between $p_i^-$ and $p_i^+$ becomes unbounded.
\end{enumerate}
\end{thm}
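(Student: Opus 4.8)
The plan is to construct the $\Sigma_i$ semi-explicitly via the Weierstrass representation, designing each one to have an ``accordion'' structure: $\Sigma_i$ is a long chain of nearly-planar pieces (``sheets''), consecutive sheets joined by a small catenoidal neck, with the necks placed alternately in a fixed small ball about $\mathcal S^+=\set{x_1=1,\,x_2=0}$ and about $\mathcal S^-=\set{x_1=-1,\,x_2=0}$. The ``zig-zag scale''---the separation between consecutive sheets---is a parameter $h_i\to 0$; the number of sheets is $N_i\to\infty$ with $N_ih_i\to\infty$ (so the sheets fill out $\mathcal L$ in the limit) but $N_ih_i$ small enough that $\Sigma_i$ fits inside $B_{R_i}$, and the surface is truncated so $\partial\Sigma_i\subset\partial B_{R_i}$. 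The model for the Weierstrass data is Riemann's family of minimal surfaces, which is exactly the case of planar sheets joined by catenoidal necks lined up along a \emph{single} line; the new ingredient is to make the necks zig-zag between two parallel lines---a configuration realized by no complete embedded minimal planar domain, by the classification of those surfaces (planes, catenoids, helicoids, and the Riemann examples). Concretely: take the underlying Riemann surface to be a long thin conformal cylinder with $2N_i$ marked points; prescribe the Gauss map $g_i$ to oscillate sheet by sheet between values near $0$ and near $\infty$, sweeping once across the unit circle in each neck region; and choose the holomorphic one-form $\omega_i$ so that $g_i\,\omega_i$ has residues of the correct small size at the marked points.

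To make this rigorous one must first satisfy the period conditions---one per neck---which can be arranged by perturbing the parameters via the implicit function theorem, just as the single period condition singling out the Riemann family is solved classically; in the degenerate regime $h_i\to 0$ the differential of the period map is non-degenerate, essentially block-by-block (distinct necks being far apart in the conformal cylinder), with bounds uniform in $N_i$, so this goes through. The main obstacle is then to show the resulting $\Sigma_i$ is \emph{embedded}, with estimates that do not deteriorate as $i\to\infty$ even though $N_i\to\infty$. What is needed is: away from the neck regions each sheet is a single-valued graph over $\set{x_3=0}$ lying within $O(h_i)$ of a leaf of $\mathcal L$ in $C^2$; in each neck region $\Sigma_i$ is a small perturbation of the standard embedded ``catenoidal neck joining two parallel sheets'' of vertical size $\asymp h_i$ (in particular with neck radius $o(h_i)$ and second fundamental form of size $\asymp h_i^{-1}$); and distinct sheets, and the two families of necks, stay a definite distance apart, the $\mathcal S^+$-necks and the $\mathcal S^-$-necks occupying disjoint solid cylinders. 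Establishing these estimates \emph{uniformly in the sheet index} $k\le N_i$---so that the global surface is genuinely embedded---is the technical heart of the argument, and the place where a naive attempt would fail.

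Granting the construction, properties \eqref{ZZitem1}--\eqref{ZZitem3} follow. For \eqref{ZZitem1}: since each sheet is $O(h_i)$-close in $C^2$ to a leaf, away from the necks $\Sigma_i\to\mathcal L$ in $C^\infty_{loc}$ as $h_i\to0$; for fixed $i$ the necks sit on $\mathcal S^\pm$ at the positions meeting the sheets, and as $h_i\to0$ these become dense in $\mathcal S^+\cup\mathcal S^-$, two distinct lines parallel to the $x_3$-axis at distance $1$ from it; since the second fundamental form of $\Sigma_i$ is of size $\asymp h_i^{-1}$ on the necks but bounded off any neighbourhood of $\mathcal S$, the convergence is smooth exactly on $\Real^3\setminus\mathcal S$. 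For \eqref{ZZitem2}: once $i$ is large, $\Sigma_i\cap B_{R_i}\setminus T_\eps(\mathcal S)$ is precisely the union of the graphical parts of the sheets, hence a union of single-valued graphs over $\set{x_3=0}$---this single-valued (``catenoid-modeled'') behaviour near a two-line singular set cannot arise as a limit of complete examples, since there two lines in $\mathcal S$ forces the double-spiral (``helicoid-modeled'') behaviour of the Riemann family (cf.\ the introduction), which is why $\Sigma_i$ is not, in the limit, a piece of a complete embedded minimal surface. For \eqref{ZZitem3}: given $\delta>0$ and $R>1$, if $p_i^\pm\in\Sigma_i\cap B_R$ with $|x_3(p_i^+)-x_3(p_i^-)|>\delta$, then (since $x_3$ changes by $\asymp h_i$ from one sheet to the next along the chain inside $B_R$) $p_i^+$ and $p_i^-$ lie on sheets whose indices differ by at least $c\,\delta/h_i$ for a fixed $c>0$; any path in $\Sigma_i$ joining them must pass in turn through every intervening neck, and from one neck to the next it must cross a whole sheet, carrying it from near $\mathcal S^+$ to near $\mathcal S^-$ or vice versa, a distance $\ge 2-o(1)$ by the graphical estimate above; hence $\dist^{\Sigma_i}(p_i^-,p_i^+)\ge(2-o(1))\,c\,\delta/h_i\to\infty$. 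On the other hand, for $i$ large the part of the chain meeting $B_{2R}$---the sheets of absolute height $\le R$ together with the necks between them, all within $\sqrt{1+R^2}<2R$ of the origin---is connected, so $p_i^-$ and $p_i^+$ lie in one component of $\Sigma_i\cap B_{2R}$.
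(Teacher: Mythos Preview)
Your approach is \emph{genuinely different} from the paper's, and the paper in fact explicitly flags why: it says the zig-zag sequence ``can presumably be constructed from the families of \cite{LRW} \ldots by rescalings and intersecting with large balls,'' but that it instead takes ``a more variational approach,'' precisely because embeddedness is ``a delicate condition to check using the Weierstrass representation.'' The paper's construction is: take a symmetric convex region $\Omega\subset H$ whose boundary contains a straight segment $L$; for small $\theta$ invoke the Meeks--White degree theory (Theorem~\ref{MWThm}) to get the unique \emph{unstable} embedded minimal annulus $\Sigma_\theta$ in the wedge $W(\theta)$ with $\partial\Sigma_\theta=\partial\Omega_\theta\cup\partial\Omega_{-\theta}$; then Schwarz-reflect iteratively across the line segments $L_{\pm\theta}$ to produce an embedded infinite planar domain $\Sigma_\theta^\infty$ confined to a slab (embeddedness is automatic, since the slab is cut by the reflecting half-planes into disjoint wedges each containing one copy of $\Sigma_\theta$). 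Theorem~\ref{MainThm}, proved by a Traizet-style harmonic rescaling and force-balancing analysis, pins the neck of $\Sigma_\theta$ at the point of $\overline{\Omega}$ nearest $\ell_2$; rescaling about that point with the scales $\alpha_i$ of Item~\eqref{MTitem4} and intersecting with $B_{\alpha_i}$ gives the $\Sigma_i$. Items~\eqref{ZZitem1}--\eqref{ZZitem3} then follow from Theorem~\ref{MainThm} and the reflection symmetry.

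The gap in your proposal is that you do not actually \emph{carry out} the construction: you write ``Granting the construction, properties \eqref{ZZitem1}--\eqref{ZZitem3} follow,'' but the construction is the entire content of the theorem. Solving $2N_i$ coupled period conditions ``via the implicit function theorem \ldots essentially block-by-block'' with bounds uniform in $N_i\to\infty$ is a serious analytic task you have only asserted, and --- as you yourself concede --- establishing embeddedness with sheet-uniform estimates ``is the technical heart of the argument, and the place where a naive attempt would fail.'' Nothing in your write-up addresses either point beyond naming it. By contrast, the paper's variational route sidesteps both: existence comes from Meeks--White, embeddedness from the slab/wedge geometry under Schwarz reflection, and neck location from force balancing. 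Your derivation of \eqref{ZZitem1}--\eqref{ZZitem3} \emph{given} the construction is fine and matches the paper's in spirit (counting necks traversed for \eqref{ZZitem3}), but as written the proposal is a plan, not a proof.
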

The sequence given by Theorem \ref{ZigZagThm}, can be thought of heuristically as a family of parallel planes joined together by necks that are distributed in a ``zig-zag''.  We call the $\Sigma_i$ a zig-zag sequence and refer to Figure \ref{zigzagFig}.   While the lamination, $\mathcal{L}$ and singular set $\mathcal{S}$ of  Item \eqref{ZZitem1} matches one of the examples  arising from Riemann's family, the convergence structure of Item \eqref{ZZitem2} disagrees substantially--specifically, near the singular set the surfaces $\Sigma_i$ are modeled on the catenoid.  Indeed, by \cite{MPR} one expects that there is no sequence of complete {embedded} planar domains behaving like the zig-zag sequence.  However, the zig-zag sequence appears to arise as the limit of a sequence to be complete immersed planar domains.  These examples are discussed by F. J. L\'{o}pez, M. Ritor\'{e} and F. Wei in \cite{LRW} using the Weierstrass representation and may be thought of as a ``twisted'' version of Riemann's family.  We point out that Item \eqref{ZZitem3}, implies that the chord arc bounds of \cite{CMCY}--which give a uniform relationship between intrinsic and extrinsic distance for embedded minimal disks--cannot hold for embedded minimal planar domains.

A slight modification of the construction of the zig-zag sequence gives a sequence of embedded minimal planar domains converging to a multiplicity three plane:
\begin{thm} \label{FinRiemThm}
There is a sequence of minimal planar domains $\Sigma_i$ with $\partial \Sigma_i\subset \partial B_{R_i}$ where $R_i\to \infty$ so that
\begin{enumerate}
 \item \label{FRitem1}$\Sigma_i$ converges in $C^\infty_{loc}(\Real^3\backslash \mathcal{S})$ to a lamination $\mathcal{L}$ consisting of a single plane $\set{x_3=0}$.  Here  $\mathcal{S}$ consists of two distinct points in $\set{x_3=0}$; 
 \item \label{FRitem2} For $\epsilon>0$, and $i$ sufficiently large, $\Sigma_i \backslash T_\epsilon(\mathcal{S})$ consists of the union of three single-valued graphs over $\set{x_3=0}$.
\end{enumerate}
\end{thm}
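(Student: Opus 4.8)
The plan is to carry out the construction of the zig-zag sequence of Theorem~\ref{ZigZagThm} with two necks in place of the infinite zig-zag, and with the sheets allowed to collapse onto a single plane rather than staying a fixed distance apart. Concretely, one would produce $\Sigma_i$ from the Weierstrass representation, using data $(g_i,dh_i)$ on a thrice-punctured sphere (conformally a sphere with three disks removed) chosen so that the associated minimal immersion $\Sigma$ — complete, but, as in the zig-zag case, only immersed — consists, after a suitable normalization, of three almost-flat sheets $E_i^0,E_i^1,E_i^2$ which are graphs $x_3=u_i^j$ over $\{x_3=0\}$ at heights $\approx 0,\epsilon_i,2\epsilon_i$ with $\epsilon_i\to 0$, joined by two catenoidal necks of size $\delta_i$ with $\delta_i\ll\epsilon_i$ whose waists converge to two prescribed distinct points $p^+,p^-\in\{x_3=0\}$; then set $\Sigma_i=\Sigma\cap B_{R_i}$ for a suitable $R_i\to\infty$. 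This really is ``the same'' construction as the zig-zag, except that the combinatorics of the neck placement has become trivial; the genuinely analytic inputs remain the closing (period) conditions and embeddedness.

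I would proceed as follows. First, write down the Weierstrass data — a small-degree Gauss map and three almost-flat ends — and verify the closing conditions, i.e. that the periods of the generating holomorphic one-forms vanish so that $\FF=\re\int(\ldots)$ descends to an honest conformal minimal immersion; this is a finite-dimensional problem, solved (after using the symmetries of the configuration) by the implicit function theorem off a limiting configuration of catenoids, and since there are only two necks it is more elementary than in the zig-zag case. Next, record the asymptotics: each $E_i^j$ is a graph over $\{x_3=0\}$ with $|\nabla u_i^j|$ and $|u_i^j-j\epsilon_i|$ tending to zero uniformly on compact sets, with all deviations controlled by $\delta_i$, while near $p^\pm$ the surface is, at scale $\delta_i$, a standard catenoid. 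Then choose $R_i\to\infty$ slowly enough, relative to the degeneration, that inside $B_{R_i}$ the three sheets stay mutually disjoint (their vertical separation $\approx\epsilon_i$ dominating their fluctuations), the two catenoidal necks — pieces of exact catenoids — are embedded and meet neither one another nor a neighbouring sheet, and the transition annuli between ``catenoid scale'' and ``plane scale'' remain under control; as with the zig-zag sequence, $R_i$ cannot be taken to be $\infty$. Conclude that $\Sigma_i$ is an embedded minimal planar domain with $\partial\Sigma_i\subset\partial B_{R_i}$. Away from fixed neighbourhoods of $p^\pm$, the asymptotics of the second step together with standard elliptic estimates then give $C^\infty_{loc}$ convergence of each sheet, hence of $\Sigma_i$, to $\{x_3=0\}$ with multiplicity three: this is Item~\eqref{FRitem2} and the smooth part of Item~\eqref{FRitem1}. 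Finally, at $p^\pm$ the curvature of the neck of size $\delta_i$ blows up like $\delta_i^{-1}$, so the convergence genuinely fails there and, by the previous step, nowhere else; hence $\mathcal{S}=\{p^+,p^-\}$, which completes Item~\eqref{FRitem1}.

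The main obstacle is the coupling in the embeddedness step: one must tune the degeneration so that the three sheets flatten fast enough that the embedding radius $R_i$ really does tend to infinity, while the neck sizes $\delta_i$ shrink fast enough relative to $\epsilon_i$ that the necks stay disjoint from one another and from the sheets, and so that the transition annuli admit uniform estimates. In the Weierstrass picture this comes down to prescribing how the logarithmic-growth/flux parameters of the ends and the neck sizes depend on a single degeneration parameter, and then carrying out uniform estimates in the annuli interpolating between the catenoid model and the plane model — estimates essentially identical to those used for the zig-zag sequence, which is precisely why this construction is only a ``slight modification'' of it. Given these, the period problem, the curvature blow-up at the necks, and the graphical estimates on the sheets are all routine.
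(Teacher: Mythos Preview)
Your proposal takes a genuinely different route from the paper. You propose to write down explicit Weierstrass data on a thrice-punctured sphere, solve the period problem, and then verify embeddedness by hand through careful estimates on the sheets, necks, and transition annuli. The paper instead works variationally: it takes two convex domains $\Omega^-,\Omega^+\subset H$ (one the rounded half-disk $\Omega$ of the zig-zag proof, the other its reflection across $L$ shifted by $2\eE_1$), invokes the Meeks--White result (Theorem~\ref{MWThm}) to produce, for each small $\theta$, a unique unstable \emph{embedded} minimal annulus $\Sigma_\theta$ spanning $\sigma^-_{-\theta}\cup\sigma^+_\theta$, and then Schwarz reflects once across the line segment $L_\theta$ to get a two-necked embedded planar domain $\Sigma_\theta^1$. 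The neck location and the graphical structure of the limit then come directly from Theorem~\ref{MainThm}, and the sequence $\Sigma_i$ is obtained by rescaling $\Sigma_{\theta_i}^1$ by the factors $\alpha_i$ of Item~\eqref{MTitem4}.

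The key tradeoff: in the paper's approach embeddedness is free (Meeks--White gives it for the annulus, and a single Schwarz reflection preserves it since $H_\theta$ separates the two copies), while the hard analytic work is front-loaded into Theorem~\ref{MainThm}, which pins down where the neck sits via force balancing and the Robin's function. In your approach the period problem is the easy part and embeddedness is the hard part---exactly the ``delicate condition to check using the Weierstrass representation'' that the introduction flags as a reason to avoid this route. Your sketch is plausible (the paper itself says the Hoffman--Karcher family ``presumably'' yields such a sequence), but note that you have misread what the ``zig-zag construction'' actually is: it is not a Weierstrass construction either, so the estimates you would need are not ``essentially identical to those used for the zig-zag sequence'' in this paper---you would have to supply them from scratch.
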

Roughly speaking, the sequence of Theorem \ref{FinRiemThm} looks like a fundamental piece of one of Riemann's examples with catenoidal ends glued onto each neck.
Work of F. J. L\'{o}pez and A. Ros \cite{LopezRos} implies that such a procedure cannot produce a complete embedded surface.   However, as with the zig-zag sequence, there is a family of complete immersed planar domains that appear to degenerate to a lamination as in Theorem \ref{FinRiemThm}.  The Weierstrass data for this family  was considered by D. Hoffman and H. Karcher in Section 5 of \cite{HoffmanKarcher}.  

While the sequences of Theorems \ref{ZigZagThm} and \ref{FinRiemThm} can presumably be constructed from the families of \cite{LRW} and \cite{HoffmanKarcher} by rescalings and intersecting with large balls, we take a more variational approach.  In particular, we construct our surfaces by using an existence result for unstable minimal annuli due to W. H. Meeks and B. White \cite{MeeksWhite2} along with a reflection argument. The bulk of the argument is devoted to controlling the position of the neck of the annulus, which we accomplish by adapting an argument of M. Traizet \cite{Traizet}.  
We follow this approach for two reasons. First of all, we are interested in embedded surfaces--a delicate condition to check using the Weierstrass representation.  More importantly, we believe that the techniques we employ may help in forming a better understanding of the possible structures of limit laminations and singular sets that arise from sequences of embedded minimal planar domains. 

\begin{figure}
 \centering
\def\svgwidth{\columnwidth}
\ifIMRN
\input{zigzagImg}
\else
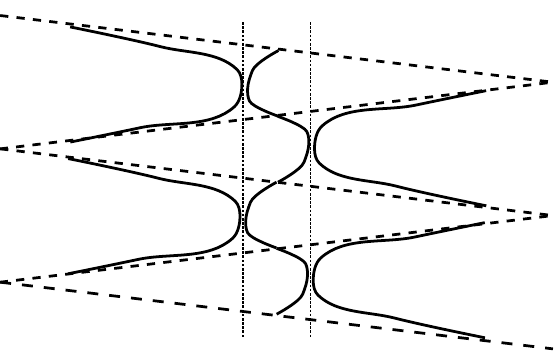
\fi
\caption{A schematic picture of an element of the zig-zag sequence}\label{zigzagFig}
\end{figure}

\section{Preliminaries}
Let $x_1,x_2$ and $x_3$ be the standard coordinates on $\Real^3$ with $\eE_1, \eE_2$ and $\eE_3$ the associated orthonormal basis and $\ell_1,\ell_2$ and $\ell_3$ the corresponding coordinate axes. The euclidean distance between two sets $A,B\subset \Real^3$ is denoted by $\dist(A,B)$.
We denote an open ball in $\Real^3$ of radius $r$ and centered at $p$ by $B_r(p)$ and by $T_r(A)$ the tubular neighborhood of radius $r$ of a set $A\subset \Real^3$.  We will always consider a surface $\Sigma \subset \Real^3$ to be an a smooth open surface so that $\overline{\Sigma}$ is a surface with boundary of class $C^2$. Given such $\Sigma$ we let $A$ be the second fundamental form of $\Sigma$ and $\dist^\Sigma$ be the intrinsic distance function.

When $\Sigma$ is an oriented minimal surface and $\gamma$ an oriented closed curve in $\Sigma$ we let $\nu: \gamma\to \Real^3$ be the unit conormal to $\gamma$ and define the force vector $\FF(\gamma)$ by
\begin{equation*}
 \FF(\gamma):=\int_\gamma \nu ds\in \Real^3.
\end{equation*}
A consequence of Stokes' theorem and the minimality of $\Sigma$ is that this vector depends only on $[\gamma]\in H_1(\Sigma)$.  When $\Sigma$ is an annulus we define the force of $\Sigma$, $\FF$, to be $\FF(\gamma)$ where $[\gamma]$ generates $H_1(\Sigma)$.

We always take $P=\set{x_3=0}\subset \Real^3$ to be the $x_1$-$x_2$ plane and $H=\set{x_1>0}\cap P$ an open half-plane.
Let $R_{\theta}$ denote the map given by rotation about $\ell_2$ by $\theta$
\begin{equation*}
 \begin{array}{ccl} R_\theta:& \Real^3 &\to  \Real^3\\
		     &(x_1,x_2,x_3) & \mapsto (x_1 \cos \theta +x_3 \sin \theta, x_2, -x_1 \sin \theta +x_3 \cos \theta).
 \end{array}
\end{equation*}
We write $H_\theta=R_\theta(H)$ for the open half-plane obtained by rotating $H$ around $\ell_2$ by $\theta$.  
More generally, for a set $\Omega\subset H$ denote by $\Omega_\theta=R_\theta(\Omega)\subset H_\theta$.  For $\Omega\subset P$ a domain and $u:\Omega\to \Real$ a continuous function the graph of $u$ is defined as
\begin{equation*}
\Gamma_u:=\set{(p, u(p)): p\in \Omega}\subset \Real^3.
\end{equation*}

For $0<\theta<\pi$ let $W(\theta)$ be the component of $\Real^3\backslash \overline{H_{-\theta}\cup H_\theta}$ containing $(1, 0, 0)$. That is $W(\theta)$ is an open wedge of angle $2\theta$ symmetric with respect to reflection through $P$. 
Consider the map $\Pi$ orthogonal projection onto $P$ 
\begin{equation*}
  \begin{array}{ccl} \Pi:& \Real^3 &\to  \Real^3\\
		   &  (x_1,x_2,x_3) & \mapsto (x_1 , x_2,0).
 \end{array}
\end{equation*}
\begin{prop}\label{GeomWedgeProp}
 Suppose that $0<\theta<\frac{\pi}{2}$ then
\begin{enumerate}
 \item \label{GeomWedgeProp1} $\Pi(W_{\theta})=H$.
 \item \label{GeomWedgeProp2} For $\Omega\subset H$ one has $\Pi(\Omega_\theta)=\Pi(\Omega_{-\theta})$.
 \item \label{GeomWedgeProp3} For $\Omega, \Omega'\subset H$, $\Omega\cap \Omega'=\emptyset $ if and only if $\Pi(\Omega_\theta)\cap \Pi(\Omega'_\theta)=\emptyset$.
\end{enumerate}
\end{prop}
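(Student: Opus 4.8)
The plan is to reduce the proposition to explicit coordinate computations; the only structural input needed is the shape of the wedge $W(\theta)$ and of the half-planes $H_{\pm\theta}$ relative to $P$. Parametrize $H=\set{(t,s,0):t>0,\ s\in\Real}$. From the formula for $R_\theta$ one reads off
\[
H_\theta=\set{(t\cos\theta,\,s,\,-t\sin\theta):t>0,\ s\in\Real},\qquad H_{-\theta}=\set{(t\cos\theta,\,s,\,t\sin\theta):t>0,\ s\in\Real},
\]
so $\overline{H_\theta}$ and $\overline{H_{-\theta}}$ are half-planes sharing the edge $\ell_2$, and any point of $\overline{H_{\pm\theta}}$ off $\ell_2$ satisfies $x_1>0$ and $x_3=\mp x_1\tan\theta$. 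Using the angular coordinate $\arg(x_1+ix_3)$ in the $x_1x_3$-plane, the rays $\arg=\pm\theta$ cut the complement of $\overline{H_\theta\cup H_{-\theta}}$ into two components, the one through $(1,0,0)$ being $W(\theta)=\set{(x_1,x_2,x_3):(x_1,x_3)\ne(0,0),\ \abs{\arg(x_1+ix_3)}<\theta}$. Since $\theta<\tfrac{\pi}{2}$ this forces $x_1>0$ on all of $W(\theta)$; moreover $H$ is connected, contains $(1,0,0)$, and meets neither $\overline{H_\theta}$ nor $\overline{H_{-\theta}}$ (its points have $x_3=0$ but $x_1>0$, while $\tan\theta\ne0$), so $H\subset W(\theta)$.

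Part \eqref{GeomWedgeProp1} is then immediate: $\Pi(W(\theta))\subseteq H$ because every point of $W(\theta)$ has positive first coordinate and $\Pi$ leaves that coordinate unchanged, while $H\subseteq\Pi(W(\theta))$ because $H\subseteq W(\theta)$ and $\Pi$ restricts to the identity on $H\subset P$.

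For parts \eqref{GeomWedgeProp2} and \eqref{GeomWedgeProp3} the key observation is that $\Pi\circ R_\theta$ and $\Pi\circ R_{-\theta}$ agree on $P$: directly from the formulas, $\Pi(R_{\pm\theta}(x_1,x_2,0))=(x_1\cos\theta,\,x_2,\,0)$, so on $P$ both coincide with the single map $\psi(x_1,x_2,0)=(x_1\cos\theta,x_2,0)$ (equivalently, $\Omega_{-\theta}$ is the reflection of $\Omega_\theta$ across $P$, and $\Pi$ is insensitive to that reflection). Hence $\Pi(\Omega_\theta)=\psi(\Omega)=\Pi(\Omega_{-\theta})$, which is \eqref{GeomWedgeProp2}. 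For \eqref{GeomWedgeProp3}, note that $\psi$ restricted to $H$ is injective, since $\psi$ is the linear automorphism $(x_1,x_2,0)\mapsto(x_1\cos\theta,x_2,0)$ of $P$ and $\cos\theta\ne0$; an injective map satisfies $\psi(\Omega)\cap\psi(\Omega')=\psi(\Omega\cap\Omega')$, so $\Pi(\Omega_\theta)\cap\Pi(\Omega'_\theta)=\psi(\Omega\cap\Omega')$ is empty exactly when $\Omega\cap\Omega'$ is.

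The argument is entirely elementary, so I do not expect a genuine obstacle. The one step calling for care is the explicit identification of $W(\theta)$ as the ``small'' open wedge of angle $2\theta$ lying over $H$, together with the inclusion $H\subset W(\theta)$: this is precisely where $\theta<\tfrac{\pi}{2}$ enters, ensuring $x_1>0$ throughout $W(\theta)$ so that $\Pi$ does not fold the wedge onto both sides of the edge $\ell_2$.
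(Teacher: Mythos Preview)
Your proof is correct and follows essentially the same approach as the paper's: both reduce to the observation that $\Pi\circ R_{\pm\theta}$ acts on $P$ as $(x_1,x_2,0)\mapsto(x_1\cos\theta,x_2,0)$, with $\cos\theta>0$ giving item~\eqref{GeomWedgeProp1}, $\cos\theta=\cos(-\theta)$ giving item~\eqref{GeomWedgeProp2}, and $\cos\theta\neq0$ (injectivity) giving item~\eqref{GeomWedgeProp3}. Your version is simply more explicit, spelling out the description of $W(\theta)$ and the inclusion $H\subset W(\theta)$ needed for the reverse containment in~\eqref{GeomWedgeProp1}, which the paper leaves implicit.
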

\begin{proof}
 For $0<\theta<\frac{\pi}{2}$,  $0<\cos \theta$ which gives item \eqref{GeomWedgeProp1}.  Similarly, item \eqref{GeomWedgeProp2} follows from $\cos \theta=\cos (-\theta)$.  Finally, if $p\in\Omega \cap \Omega'$ and $p=(x_1,x_2,0)$ then $q=(x_1\cos \theta , x_2,0)\in \Pi(\Omega_\theta)\cap \Pi(\Omega'_\theta) $.  Since $\cos \theta\neq 0$ this verifies Item \eqref{GeomWedgeProp3}.
\end{proof}

\section{Unstable minimal annuli in wedges}
 In \cite{MeeksWhite2}, Meeks and White use degree theory arguments and some special properties of the Gauss map to understand the space of minimal annuli spanning a pair of convex planar curves.  A consequence of their work is the following:
\begin{thm}\label{MWThm}
 Let $\sigma^-, \sigma^+\subset H$ be closed convex curves of class $C^{2, \alpha}$.  If $0<\theta<\frac{\pi}{2}$ then $\sigma^-_{-\theta}\cup \sigma^+_{\theta}$ bounds one of the following in $W(\theta)$:
\begin{enumerate}
\item No minimal surface;
\item Exactly one minimal surface, $\Sigma$, which is a marginally stable annulus;
\item One strictly stable minimal annulus $\Sigma_S$ and one index one minimal annulus $\Sigma_U$ and possibly other minimal surfaces.
\end{enumerate}
\end{thm}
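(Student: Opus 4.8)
The plan is to recognize $\Gamma:=\sigma^-_{-\theta}\cup\sigma^+_{\theta}$ as an \emph{extremal pair} of convex curves in the sense of Meeks and White, so that the classification is a direct consequence of the main theorem of \cite{MeeksWhite2}; the real content is checking that our rotated configuration satisfies all of their hypotheses.

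First I would record the elementary geometry. Since $0<\theta<\frac{\pi}{2}$, the half-planes $H_{-\theta}$ and $H_\theta$ are distinct, meet exactly along $\ell_2$, and are the two faces of the convex wedge $W(\theta)$. Because $\sigma^\pm\subset H=\set{x_1>0}\cap P$ stays a positive distance from $\ell_2$, the rotated curves $\sigma^-_{-\theta}\subset H_{-\theta}$ and $\sigma^+_{\theta}\subset H_\theta$ are disjoint $C^{2,\alpha}$ convex Jordan curves, each lying in one of the two face-planes of $W(\theta)$ and avoiding the edge $\ell_2$; in particular $\Gamma$ is the boundary of a topological annulus sitting in $\overline{W(\theta)}$.

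Next I would verify the extremal-pair condition, namely that each of the two curves lies on $\partial K$ with $K:=\ConvHull(\Gamma)$. Since $\Gamma\subset\overline{W(\theta)}$ and $\overline{W(\theta)}$ is convex, $K\subset\overline{W(\theta)}$; the plane carrying $H_{\pm\theta}$ is a support plane of $W(\theta)$, hence of $K$, and its slice of $K$ contains the convex curve $\sigma^\pm_{\pm\theta}$, so $\sigma^\pm_{\pm\theta}\subset\partial K$. This is precisely the statement that $\Gamma$ is an extremal pair of convex curves. I would also observe that, by the convex hull property (an application of the maximum principle), every compact minimal surface bounded by $\Gamma$ lies inside $K\subset\overline{W(\theta)}$, so asking for surfaces ``in $W(\theta)$'' imposes no additional restriction. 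Then I would invoke the main theorem of \cite{MeeksWhite2}: for an extremal pair $\Gamma$ of $C^{2,\alpha}$ convex curves, a degree-theoretic study of the moduli space of spanning minimal annuli—exploiting the restricted image of the Gauss map of such an annulus—yields exactly the stated trichotomy, i.e. either no minimal annulus, or a single one which is then degenerate (marginally stable, i.e. stable with a nontrivial Jacobi field), or exactly two, one strictly stable and one of Morse index one. The clause ``and possibly other minimal surfaces'' records that this count governs annuli only: spanning minimal surfaces of higher genus or disconnected type, still trapped inside $K$, are not addressed, while in the first alternative one additionally uses the convex hull property to exclude a spanning minimal surface of any topological type.

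The main obstacle is not analytic but bookkeeping—making sure the rotated curves meet \emph{every} hypothesis in Meeks and White's framework ($C^{2,\alpha}$ regularity, convexity, disjointness, and the extremal-pair property) so that their conclusions transfer verbatim. Were a self-contained proof wanted instead, the genuine difficulty would be reproving the degree count itself, which rests on compactness of the space of such annuli (curvature estimates together with the convex hull bound) and on separating the stable and unstable branches via the flux vector $\FF$ and the behaviour of the Gauss map; I would not attempt that here.
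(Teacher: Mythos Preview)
Your proposal is correct and follows essentially the same approach as the paper: verify that $\sigma^-_{-\theta}\cup\sigma^+_\theta$ is an extremal pair (it lies on the boundary of its convex hull because $W(\theta)$ is convex and the curves sit in $\partial W(\theta)$), then invoke Theorem~0.2 of \cite{MeeksWhite2}. The paper's proof is the same argument compressed to three lines; your additional remarks on disjointness, the convex hull property forcing any spanning surface into $\overline{W(\theta)}$, and the interpretation of the trichotomy are correct elaborations but not logically necessary.
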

\begin{proof}
 The theorem follows from Theorem 0.2 of \cite{MeeksWhite2} provided we verify that $\sigma^-_{-\theta}$ and $\sigma^+_\theta$ are an extremal pair of curves.  That is the union $\sigma^-_{-\theta}\cup \sigma^+_{\theta}$ lies in the boundary of its convex hull. 
 As $W(\theta)$ is a convex domain and $\sigma^-_{-\theta}\cup \sigma^+_{\theta}\subset \partial W(\theta)$ this is immediate.
\end{proof}

We consider now the following analogue of a problem studied by Traizet in \cite{Traizet}.
Let $\Omega^-,\Omega^+\subset H$ be convex domains with $\partial \Omega^\pm=\sigma^\pm\subset H$ of class $C^{2,\alpha}$ and $\Omega^-\cap \Omega^+$ non-empty.  For $\theta$ sufficiently small the least area surface spanning $\sigma^-_{-\theta}\cup \sigma^+_\theta$ is an annulus. Hence, by Theorem \ref{MWThm}, there is a unique unstable minimal annulus $\Sigma_\theta$ with $\partial \Sigma_\theta=\sigma^-_{-\theta}\cup\sigma^+_\theta$.  We are interested in the behavior of $\Sigma_\theta$ as $\theta\to 0$. The main result in this direction is modeled on an analogous result of Traizet \cite{Traizet} for sequences of unstable annuli bounded by convex planar curves in {parallel} planes collapsing towards each other.  We note that Traizet considers also the behavior sequences with uniformly bounded genus.

\begin{thm} \label{MainThm}
Fix convex domains $\Omega^-,\Omega^+\subset H$ with $\partial \Omega^\pm=\sigma^\pm\subset H$ of class $C^{2,\alpha}$ and $\Omega^-\cap \Omega^+$ non-empty. With $\theta_i>0$ we suppose that $\Sigma_{i}$ is an unstable minimal annulus with $\partial \Sigma_{i}= \sigma^-_{-\theta_i}\cup\sigma^+_{\theta_i}$. The sequence $\Sigma_i$ has the following behavior (after passing to a subsequence) as $\theta_i\to 0$:
\begin{enumerate}
 \item \label{MTitem1}$\overline{\Sigma}_i$ converges to $\overline{\Omega^-\cup\Omega^+}$ in the Hausdorff sense.
 \item \label{MTitem2} If $\nu_i$ is the Radon measure on $\Real^3$ given by
\begin{equation*}
 \nu_i(U)=\int_{\Sigma_i\cap U} |A|^2 
\end{equation*}
then
\begin{equation*}
 \nu_i\to 8\pi \delta_p
\end{equation*}
in the weak* sense.  Here $\delta_p$ is the Dirac measure concentrated at a point $p\in \overline{\Omega^-\cap \Omega^+}$ which satisfies $\dist(p,\ell_2)=\dist(\Omega^-\cap\Omega^+,\ell_2)$.
\item \label{MTitem3} For each $\epsilon>0$, $\Sigma \backslash B_{\epsilon}(p)$ consists of two components $\Sigma_i^{\pm, \epsilon}$  that converge in $ C^2(\Real^3\backslash B_{\epsilon}(p))$ to $\overline{\Omega}^\pm\backslash B_{\epsilon}(p)$.
\item \label{MTitem4} There exists a sequence $\alpha_i\to \infty$ so that if $\hat{\Sigma}_i=\alpha_i \left( \Sigma_i-p\right)+p$ then $\hat{\Sigma}_i$ converges in the sense of Items \eqref{MTitem1}, \eqref{MTitem2} and \eqref{MTitem3} to the union of a half-plane $\hat{H}$  and a convex region $\hat{\Omega}$ with $D_1(p)\subset \hat{H}\cap\hat{\Omega}$ and $\dist(p,\partial \hat{H})=1$.
\end{enumerate}
\end{thm}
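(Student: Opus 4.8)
The plan is to combine the standard compactness machinery for minimal surfaces with a Traizet-style analysis of where the curvature concentrates. First I would record a uniform total curvature bound. Since $\Sigma_i$ is a minimal annulus, Gauss--Bonnet gives $\int_{\Sigma_i}|A|^2=-2\int_{\Sigma_i}K=2\int_{\partial\Sigma_i}\kappa_g$; and a boundary curve $\gamma\subset\Sigma_i$ has $\kappa_g=\langle\vec\kappa_\gamma,\nu_i\rangle$ with $\vec\kappa_\gamma$ its space-curve curvature vector and $\nu_i$ a unit conormal, so because $\partial\Sigma_i=\sigma^-_{-\theta_i}\cup\sigma^+_{\theta_i}$ consists of rigid rotations of the \emph{fixed} $C^{2,\alpha}$ curves $\sigma^\pm$, one gets $\int_{\Sigma_i}|A|^2\le C$ independent of $i$. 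Hence, after passing to a subsequence, the measures $\nu_i$ converge weak${}^*$ to a finite Radon measure $\nu$ supported in $\ConvHull(\sigma^-\cup\sigma^+)\subset P$. Let $\mathcal S_0$ be the finite set of atoms of $\nu$ of mass at least the Choi--Schoen $\eps$-regularity threshold $\eps_0$. By $\eps$-regularity (interior, together with its boundary version near the smoothly converging curves $\sigma^\pm_{\pm\theta_i}$), $|A_{\Sigma_i}|$ is locally uniformly bounded on $\Real^3\setminus\mathcal S_0$, so a further subsequence converges there, with locally finite multiplicity, to a minimal lamination whose leaves lie in $P$ by the convex hull property, and whose boundary pieces lie on $\sigma^-\cup\sigma^+$.

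Next I would show $\mathcal S_0$ is a single point. It is nonempty: smooth convergence everywhere would produce two disjoint flat sheets over $\Omega^-\cap\Omega^+$ that cannot be joined within the connected surface $\Sigma_i$ and cannot be joined along $\partial\Sigma_i=\sigma^\pm_{\pm\theta_i}$, a contradiction. Blowing up $\Sigma_i$ at a point of $\mathcal S_0$ to unit curvature yields a complete nonflat minimal surface of finite total curvature and index $\le 1$ --- here I use Theorem~\ref{MWThm}(3), which applies since the least-area spanning surface is an annulus for $\theta$ small, so the $\Sigma_i$ have index one; genus zero together with the two-ended structure of a neck and the index bound forces this blow-up to be a catenoid, so $\nu$ carries mass exactly $8\pi$ at each point of $\mathcal S_0$. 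Since each such neck carries a second-variation negative direction localized near it, two of them would give index $\ge 2$; hence $\mathcal S_0=\{p\}$ and $\nu=8\pi\delta_p$, giving Item~\eqref{MTitem2} apart from the location of $p$. Item~\eqref{MTitem3} then follows: for large $i$, $\Sigma_i\setminus B_\eps(p)$ has bounded curvature and splits into two components $\Sigma_i^{\pm,\eps}$ which are stable (the unique unstable direction sitting at the neck), so by Schoen's curvature estimate they converge in $C^2$ to stable minimal surfaces in $P$ with boundary $\sigma^\pm$, i.e. to $\overline\Omega^\pm\setminus B_\eps(p)$; combining this with the fact that $\Sigma_i\cap B_\eps(p)$ is a small catenoidal neck near $p$ gives the Hausdorff convergence of Item~\eqref{MTitem1}. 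Finally, since the neck joins the sheet over $\Omega^-$ to the sheet over $\Omega^+$ and these overlap precisely over $\Omega^-\cap\Omega^+$, I obtain $p\in\overline{\Omega^-\cap\Omega^+}$.

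The remaining and hardest point is the identity $\dist(p,\ell_2)=\dist(\Omega^-\cap\Omega^+,\ell_2)$, which I would establish by adapting the argument of Traizet \cite{Traizet}. Each sheet $\Sigma_i^\pm$ is a graph of a function $u_i^\pm$ with boundary values $\approx\pm\theta_i x_1$ coming from $R_{\pm\theta_i}$, so to leading order $u_i^\pm\approx\pm\theta_i w^\pm$ with $w^\pm$ harmonic and $w^\pm|_{\sigma^\pm}=x_1$, plus a logarithmic correction near $p$ produced by the neck. The force $\FF(\gamma_i)$ of the annulus is a homological invariant: on a small loop around the neck it equals the flux of the blown-up catenoid, of size comparable to the neck radius and essentially vertical, whereas on a loop near $\partial\Sigma_i$, computed via the graphs $u_i^\pm$, its leading term vanishes because $\int_{\sigma^\pm}(\text{inward planar normal})\,ds=0$. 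Equating the next-order (horizontal) contributions yields a balancing relation; geometrically, the two sheets are separated in the $x_3$-direction by $\approx\theta_i(w^++w^-)$, which to leading order is $\approx 2\theta_i\,\dist(\cdot,\ell_2)$, so the neck is driven toward the point of the admissible region $\overline{\Omega^-\cap\Omega^+}$ where the sheets are closest, forcing $p$ to realize $\dist(\Omega^-\cap\Omega^+,\ell_2)$. Making this rigorous --- controlling all error terms, the neck scale, and the behavior of $\Sigma_i$ at the scales intermediate between the neck and the curves $\sigma^\pm_{\pm\theta_i}$ --- is the bulk of the work and the principal obstacle.

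Lastly, Item~\eqref{MTitem4} I would obtain by rerunning this entire analysis on the rescaled annuli $\hat\Sigma_i=\alpha_i(\Sigma_i-p)+p$ for a suitable sequence $\alpha_i\to\infty$: choosing $\alpha_i$ so that the geometry near $\ell_2$ is seen at unit scale normalizes the rescaled constraint locus to lie at distance $1$ from $p$, and the rescaled boundary configuration degenerates to a half-plane $\hat H$ --- the blow-up of the effective domain of one sheet, with $\dist(p,\partial\hat H)=1$ --- together with a convex region $\hat\Omega$ with $D_1(p)\subset\hat H\cap\hat\Omega$; the conclusions of Items~\eqref{MTitem1}--\eqref{MTitem3} for $\hat\Sigma_i$ then follow exactly as before, from the uniform total curvature bound, $\eps$-regularity, and the single-neck and force-balancing arguments applied to $\hat\Sigma_i$.
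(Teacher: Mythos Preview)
Your overall architecture matches the paper's: uniform total curvature, $\epsilon$-regularity to isolate a finite concentration set, catenoid blow-up, and Traizet-style force balancing to locate the neck. Two differences are worth recording.

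\textbf{Neck uniqueness.} You argue via index: two catenoidal necks would each carry a localized negative direction for the second variation, contradicting $\mathrm{index}(\Sigma_i)=1$. The paper instead exploits the \emph{sharp} bound $\int_{\Sigma_i}|A|^2\le 8\pi$ --- available because $\sigma^\pm$ are \emph{convex} planar curves, so $|\kappa_g^\Sigma|\le\kappa^{\Real^3}$ and Gauss--Bonnet on the annulus gives exactly $8\pi$ --- together with the fact that a single catenoidal blow-up pair already absorbs $\ge 8\pi-\epsilon$ of total curvature (Proposition~\ref{CatTotCurvProp}). This is cleaner than the index route and delivers the precise mass $8\pi$ in Item~\eqref{MTitem2} for free; your weaker bound $\int|A|^2\le C$ only gives finitely many necks and forces you onto the index argument, which in turn requires you to justify that the localized negative directions are independent.

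\textbf{Location of $p$ and Item~\eqref{MTitem4}.} Your heuristic (the sheets are separated by $\approx 2\theta_i\,\dist(\cdot,\ell_2)$, so the neck is driven to the closest point) is correct, and you rightly flag this as the crux. The paper's implementation, however, has a two-scale structure you do not describe. First, one assumes $p\in\Omega^-\cap\Omega^+$ and applies the harmonic-rescaling force expansion (Theorem~\ref{HarmRescale}) to each sheet $\Sigma_i^\pm$; the new feature over Traizet's parallel-plane setting is that the rotation $R_{\pm\theta_i}$ contributes a term of order $\lambda_i\sin\theta_i$ to the $\eE_1$-component of the force, with \emph{opposite signs} on the two sheets, so balancing $\FF_i^+=\FF_i^-$ in that direction yields $2\pi\theta_i=-2\pi\theta_i+O(\theta_i^2)$, a contradiction. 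Hence $p\in\partial(\Omega^-\cap\Omega^+)$. Second, one rescales by $\alpha_i=d_i^{-1}$ with $d_i=\dist(p_i,\partial\Sigma_i)\to 0$, so that the rescaled boundaries become half-planes $H^\pm$ (or a half-plane and a bounded convex region), and repeats the force balancing on the rescaled picture; now the Robin's function of a half-plane satisfies $\nabla R^\pm(0)=\frac{1}{2\pi}\vV^\pm$ with $\vV^\pm$ the outward unit normal, and balancing forces $\vV^-+\mu\vV^+$ to be a positive multiple of $\eE_1$, which geometrically pins $p$ to the point of $\overline{\Omega^-\cap\Omega^+}$ nearest $\ell_2$. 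This second rescaling \emph{is} Item~\eqref{MTitem4}: the ``suitable $\alpha_i$'' is $d_i^{-1}$, not a scale chosen to see $\ell_2$, and the limit objects $\hat H,\hat\Omega$ are the blow-ups of $\Omega^\mp$ at the boundary point $p$ at that scale.
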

\begin{rem}
 By interior elliptic estimates, the $\Sigma^\pm_i\to\Omega^\pm\backslash\overline{B}_\epsilon(p)$ in $C^\infty_{loc}$.
\end{rem}

The bulk of this article will consist in proving Theorem \ref{MainThm}. We begin by noting some useful properties of minimal annuli spanning curves $\sigma^\pm_{\pm\theta}$.  We refer to Proposition 3 of \cite{Traizet} for corresponding results on minimal surfaces spanning a slab.

\begin{prop} \label{FactsProp}
 Fix convex domains $\Omega^-,\Omega^+\subset H$ with $\partial \Omega^\pm=\sigma^\pm\subset H$ of class $C^{2,\alpha}$.  If $0<\theta<\frac{\pi}{2}$ and $\Sigma$ is a minimal annulus with $\partial \Sigma =\sigma^-_{-\theta} \cup \sigma^+_{\theta}$ then there is a constant $C=C(\Omega^-,\Omega^+)$ so the following holds:
\begin{enumerate}
 \item \label{Factsitem1} $\int_{\Sigma} |A|^2 \leq 8\pi$;
\item \label{Factsitem3} $\Sigma$ is embedded and for any ball $B_r(p)$ one has $Area(B_r(p)\cap \Sigma)\leq 2 \pi r^2$;
\item \label{Factsitem4} $\Omega^- \cap \Omega^+ \neq \emptyset$;
\item \label{Factsitem5} $\Sigma\subset W(\theta)\cap T_{C \theta}(\Omega^-\cup \Omega^+)$;
\item \label{Factsitem6} If $\Sigma$ is not a stable annulus and $D_r(p)\subset \Omega^- \cap \Omega^+$ for $r\geq C \theta$ then $\Sigma\cap \Pi^{-1}(D_r(p))\neq \emptyset$.
\item \label{Factsitem7} For generic $p\in \Omega^-\cap \Omega^+$, $\Pi^{-1}(p)\cap \Sigma$ consists of an even number of points.
\end{enumerate}
\end{prop}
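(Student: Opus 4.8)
The plan is to deduce all six assertions from one geometric fact --- the wedge analogue of Proposition~3 of Traizet \cite{Traizet} for slabs --- namely that $\Sigma$ is a minimal annulus trapped, up to an error of order $\theta$, inside the thin wedge $W(\theta)$ around the planar set $\Omega^-\cup\Omega^+$; I would establish the items in the order \eqref{Factsitem1}, \eqref{Factsitem5}, \eqref{Factsitem4}, \eqref{Factsitem3}, \eqref{Factsitem7}, \eqref{Factsitem6}. For \eqref{Factsitem1}: since $\Sigma$ is minimal one has $|A|^2 = -2K$ for the Gauss curvature $K$, so $\int_\Sigma|A|^2 = -2\int_\Sigma K$, and Gauss--Bonnet for the annulus ($\chi(\Sigma)=0$) gives $\int_\Sigma K = -\int_{\partial\Sigma}k_g$ with $k_g$ the geodesic curvature in $\Sigma$. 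Each component of $\partial\Sigma$ lies in a plane (a rotate $R_{\pm\theta}(P)$ of $P$), so along it the space curvature vector of the curve lies in that plane with length the planar curvature $\kappa$ of the convex curve $\sigma^\pm$; pairing with the unit conormal of $\Sigma$ gives $|k_g|\le\kappa$ pointwise, and $\int\kappa = 2\pi$ on each convex component, whence $\int_{\partial\Sigma}k_g\le 4\pi$ and $\int_\Sigma|A|^2\le 8\pi$.

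For the confinement \eqref{Factsitem5} I would first use the convex hull property ($\overline{W(\theta)}$ is convex and contains $\partial\Sigma$) to get $\Sigma\subset\overline{W(\theta)}$, and then, since the half-planes bounding $W(\theta)$ are flat hence minimal, the strong maximum principle to keep $\Sigma$ off $\partial W(\theta)$ away from $\partial\Sigma$; since $\Sigma\subset\ConvHull(\partial\Sigma)$ is bounded and the wedge is thin, this already forces $|x_3|\le C\theta$ on $\Sigma$. For the horizontal confinement I would adapt Traizet's barrier argument: with $K^\pm:=\Pi(\overline{\Omega^\pm}_{\pm\theta})$, which by Proposition~\ref{GeomWedgeProp} is the compact convex set obtained from $\overline{\Omega^\pm}$ by a coordinate scaling, set $g(x):=\min\big(\dist(\Pi(x),K^-),\dist(\Pi(x),K^+)\big)$. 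The positive sublevel sets of $\dist(\cdot,K^\pm)$ are solid cylinders over convex curves, hence mean-convex, so comparing $\Sigma$ with these cylinders via the maximum principle excludes a positive interior maximum of $g|_\Sigma$; as $g\equiv 0$ on $\partial\Sigma$ this yields $\Pi(\Sigma)\subset K^-\cup K^+$, and since $K^\pm$ lies within $C\theta$ of $\Omega^\pm$ we obtain \eqref{Factsitem5} together with the $x_3$-bound. Then \eqref{Factsitem4} is immediate: if $\Omega^-\cap\Omega^+=\emptyset$, items \eqref{GeomWedgeProp2} and \eqref{GeomWedgeProp3} of Proposition~\ref{GeomWedgeProp} make $K^-$ and $K^+$ disjoint, so $\Pi(\Sigma)\subset K^-\cup K^+$ would be disconnected with the two pieces carrying the two boundary curves, contradicting connectedness of $\Sigma$.

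For \eqref{Factsitem3}, \eqref{Factsitem5} puts $\Sigma$ in a thin slab with total curvature $\le 8\pi$; interior estimates for minimal surfaces of small total curvature then exhibit $\Sigma$ as a union of bounded-gradient graphs over $P$ away from the finitely many curvature-concentration points, and as catenoid-like near them, so $\Sigma$ is locally embedded and, being an annulus collapsed onto the planar set $\Omega^-\cup\Omega^+$, at most double-sheeted; global embeddedness and $Area(B_r(p)\cap\Sigma)\le 2\pi r^2$ (density $\le 2$) then follow from the monotonicity formula, which applies up to $\partial\Sigma$ because $\partial\Sigma\subset\partial\overline{W(\theta)}$ with $\overline{W(\theta)}$ convex. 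For \eqref{Factsitem7}: a generic $p$ in the interior of $\Pi(\Sigma)$ is a regular value of $\Pi|_\Sigma$, so $\Pi^{-1}(p)\cap\Sigma$ is finite, and its parity equals the mod-$2$ intersection number of the vertical line $\Pi^{-1}(p)$ with the closed $2$-cycle obtained by capping $\partial\Sigma$ with the flat convex disks $\overline{\Omega^\pm}_{\pm\theta}$ in their planes --- which vanishes since the line bounds in the complement of that cycle --- so the count is even for every $p\in\Omega^-\cap\Omega^+$ avoiding $\Pi(\partial\Sigma)$.

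Finally \eqref{Factsitem6}: degree theory alone will not suffice, since a catenoid-type annulus can have projection degree $0$ over $\Omega^-\cap\Omega^+$, so one must use that $\Sigma$ is not stable. If $\Sigma\cap\Pi^{-1}(D_r(p))=\emptyset$ with $D_r(p)\subset\Omega^-\cap\Omega^+$ and $r\ge C\theta$, then $\Sigma$ is a non-stable minimal annulus confined by \eqref{Factsitem5} to a thin region and avoiding a full vertical cylinder, and I would then, following Traizet \cite{Traizet}, use the near-planarity of $\Sigma$ and the translations of the barrier cylinders to build a positive solution of the Jacobi equation on $\Sigma$, forcing $\Sigma$ to be stable --- a contradiction. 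I expect this last step to be the hardest part, with the barrier argument of \eqref{Factsitem5} (where the non-smooth locus $\{\dist(\cdot,K^-)=\dist(\cdot,K^+)\}$ must be avoided, e.g.\ by perturbing one of $K^\pm$ and taking a limit) and the embeddedness claim in \eqref{Factsitem3} close behind.
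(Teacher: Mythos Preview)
Your treatment of \eqref{Factsitem1} and \eqref{Factsitem7} matches the paper's. The difficulties lie elsewhere.

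\medskip
\textbf{The barrier argument for \eqref{Factsitem5} does not work.} The function $g=\min\bigl(\dist(\Pi(\cdot),K^-),\dist(\Pi(\cdot),K^+)\bigr)$ is a \emph{minimum} of two convex (hence $\Sigma$-subharmonic) functions, and the minimum of subharmonic functions need not be subharmonic. Concretely: if $g$ attains a positive interior maximum $c$ at $p_0\in\Sigma$ with, say, $\dist(\Pi(p_0),K^-)=c$, the cylinder $\{\dist(\Pi(\cdot),K^-)=c\}$ is the only available barrier, but you only know $\min(f^-,f^+)\le c$ nearby, not $f^-\le c$, so $\Sigma$ may cross that cylinder and no one-sided touching is forced. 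Your proposed perturbation of one of $K^\pm$ does not repair this: the obstruction is not the non-smoothness of $g$ but the sign of the comparison. The paper instead takes $C=4\sup_{\Omega^-\cup\Omega^+}\dist(\cdot,\ell_2)$, notes $\Sigma\subset\{|x_3|\le\frac{C}{4}\theta\}$, and then for each $p\in P\setminus T_{C\theta}(\Omega^-\cup\Omega^+)$ slides a catenoid piece $Cat_{p+t\mathbf w}$ (with boundary circles of radius $C/2$ at heights $\pm\frac{C}{4}\theta$) in from infinity along a direction $\mathbf w$ keeping $\partial Cat$ away from $\partial\Sigma$; the strong maximum principle then prevents $\Sigma$ from meeting $Cat_p$. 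This gives only $\Sigma\subset T_{C\theta}(\Omega^-\cup\Omega^+)$, not the stronger $\Pi(\Sigma)\subset K^-\cup K^+$ you assert.

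\medskip
\textbf{Consequently your route to \eqref{Factsitem4} collapses.} You deduce $\Omega^-\cap\Omega^+\neq\emptyset$ from $\Pi(\Sigma)\subset K^-\cup K^+$, but that containment is exactly what fails above; and even the correct weaker confinement $\Sigma\subset T_{C\theta}(\Omega^-\cup\Omega^+)$ says nothing when $\theta$ is not small. The paper proves \eqref{Factsitem4} independently by a moving-planes/reflection argument \`a la Rossman: if $\Omega^-\cap\Omega^+=\emptyset$, separate them by a line $\ell\subset P$, reflect $\Sigma$ across $P$ to $\hat\Sigma$, and translate $\hat\Sigma$ along $\Pi(R_\theta\ell)$; the interior and boundary maximum principles then produce a contradiction at the first moment of contact.

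\medskip
\textbf{Items \eqref{Factsitem3} and \eqref{Factsitem6} are handled quite differently.} For \eqref{Factsitem3} the paper simply invokes Ekholm--White--Wienholtz: since $\partial\Sigma$ has total curvature $4\pi$, $\Sigma$ is embedded with density at most $2$. Your graphical/monotonicity sketch leans on thinness of the slab (hence on small $\theta$) and does not actually yield embeddedness for all $\theta<\pi/2$. For \eqref{Factsitem6} the paper does not construct a positive Jacobi field; instead, assuming $\Sigma\cap\Pi^{-1}(D_r(q))=\emptyset$, the catenoid $Cat_q$ is disjoint from $\Sigma$ and has $\partial Cat_q\cap W(\theta)=\emptyset$, so $W(\theta)\setminus(\Sigma\cup Cat_q)$ has two mean-convex components $U_\pm$ in each of which $\partial\Sigma$ is not null-homotopic; Meeks--Yau then gives two stable annuli $\Sigma_\pm\subset\overline U_\pm$ with $\partial\Sigma_\pm=\partial\Sigma$, and the Meeks--White trichotomy (Theorem~\ref{MWThm}) forces $\Sigma=\Sigma_-=\Sigma_+$, hence $\Sigma$ is stable.
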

\begin{proof}
In general $|H_{\partial \Sigma}^\Sigma|\leq |H_{\partial \Sigma}^{\Real^3}|$ where $H_{\partial \Sigma}^\Sigma$ is the geodesic curvature (with respect to the outward normal) of $\partial \Sigma$ as a curve in $\Sigma$ while $H_{\partial \Sigma}^{\Real^3}$ is the geodesic curvature of $\partial \Sigma$ as a curve in $\Real^3$.  As $\partial \Sigma$ is a pair of planar convex curves $|H_{\partial \Sigma}^{\Real^3}|=H_{\partial \Sigma}^{\Real^3}\geq 0$ and the Gauss-Bonnet formula and Gauss equation together imply
\begin{equation*}
 \int_{\Sigma} |A|^2 =2 \int_{\partial \Sigma} H^{\Sigma}_{\partial \Sigma} \leq 2 \int_{\partial \Sigma} H^{\Real^3}_{\partial \Sigma}=8\pi. 
\end{equation*}
This verifies Item \eqref{Factsitem1}.
Item \eqref{Factsitem3} follows immediately from \cite{EWW}.

We next show Item \eqref{Factsitem4}. Our argument is a variation on \cite{Rossman}.  Suppose that there were disjoint regions $\Omega^-,\Omega^+$ and a minimal annulus $\Sigma$ with $\partial \Sigma=\sigma_{-\theta}^-\cup \sigma_\theta^+$.  
Since both domains are disjoint and convex it is possible to pick a line $\ell\subset P$ that separates $\Omega^-$ from $\Omega^+$.
Let $\ell(\theta)$ be the line in $P$ given by $\Pi(R_\theta(\ell))$ and let $P^\perp$ be the plane orthogonal to $P$ containing $\ell(\theta)$. We note that by Item \eqref{GeomWedgeProp1} of Proposition \ref{GeomWedgeProp} that $\ell(\theta)=\ell(-\theta)$.  Moreover, as $\Pi (P^\perp)=\ell(\theta)$, Item \eqref{GeomWedgeProp3} of Proposition \ref{GeomWedgeProp} implies that $\Omega_{-\theta}^-, \Omega_{\theta}^+$ and $P^\perp$ are pair-wise disjoint.  

Consider $\hat{\Sigma}$ the reflection of $\Sigma$ across the plane $P$. 
Thus,  $\partial \hat{\Sigma}=\sigma^-_\theta\cup \sigma^+_{-\theta}\subset \partial W(\theta)$.  In particular, $\partial \Sigma \cap \partial \hat{\Sigma}\subset P^\perp$ and, by the convex hull property, $\Sigma\cap P \neq \emptyset$ and $ \Sigma\cap P \subset \Sigma \cap \hat{\Sigma}$ so $\Sigma \cap \hat{\Sigma}\neq \emptyset$.
Now pick $\vV$ a unit vector parallel to $\ell(\theta)$ so that $\vV\cdot \eE_1\leq 0$.  Set $\hat{\Sigma}_t=\hat{\Sigma}+t\vV$ the translate of $\hat{\Sigma}$. For $t>0$, $W(\theta)\subset W(\theta)+t\vV$ and so $\partial \hat{\Sigma}_t \cap W(\theta) =\emptyset$ for all $t\geq 0$. Pick $t_0>0$ so that $\hat{\Sigma}_{t_0}$ is separated from $\Sigma$ by a plane normal to $\vV$.  There is then a $t_1$ with $0<t_1\leq t_0$ so for $t>t_1$ $\Sigma\cap\hat{\Sigma}_t=\emptyset$ while for $t<t_1$ $\Sigma\cap \hat{\Sigma}_t\neq \emptyset$. By the strict maximum principle $\Sigma\cap \hat{\Sigma}_{t_1}=\emptyset$ and $\emptyset\neq \partial \Sigma\cap \partial \hat{\Sigma}_{t_1}\subset P^\perp$.  However, by the boundary maximum principle and the compactness of $\partial \Sigma\cap \partial \hat{\Sigma}_{t_1}$ there is an $\epsilon>0$ so $\nN(p)\cdot \hat{\nN}(p)>-1+\epsilon$ for every $p\in \partial \Sigma\cap \partial \hat{\Sigma}_{t_1}$, here $\nN$ and $\hat{\nN}$ are the outward normals of $\Sigma$ and of $\hat{\Sigma}_{t_1}$.   This combined with the fact that for $t<t_1$ $\partial \Sigma\cap \partial \hat{\Sigma}_t\subset P^\perp$ means that $\Sigma$ and $\hat{\Sigma}$ are disjoint near $\partial \Sigma\cup \partial \hat{\Sigma}_t$ for $t$ near $t_1$. Thus, the maximum principle implies $\Sigma\cap \hat{\Sigma}_t=\emptyset$ for $t$ near $t_1$ a contradiction.

We next verify Item \eqref{Factsitem5} taking
\begin{equation*}
 C=4\sup_{p\in \Omega^- \cup \Omega^+} \dist(p, \ell_2).
\end{equation*}
As $\sin \theta \leq \theta$ for $\theta\geq 0$,
\begin{equation*}
 \Sigma \subset \set{|x_3|\leq \frac{C}{4} \sin \theta}\subset \set{|x_3|\leq \frac{C}{4}  \theta}.
\end{equation*}
  In a similar vein, as $1-\cos \theta\leq \theta$ for $\theta\geq 0$, $\Pi(\Omega_{-\theta}^-
\cup\Omega_\theta^+)\subset T_{\frac{C}{4}\theta}(\Omega^-\cup \Omega^+)$ and hence $\partial \Sigma \subset T_{\frac{C}{2} \theta}(\Omega^-\cup \Omega^+)$.  That is, if $C_p$ is the circle of radius $\frac{C}{2}$ 
 centered at a point $p\in P$ then for each  $p\in P\backslash T_{C \theta}(\Omega^-\cup \Omega^+)$ there is a unit vector $\wW$ parallel to $P$ so that $C_{p+t\wW}\cap T_{\frac{C}{2}\theta}
(\Omega^-\cup \Omega^+)=\emptyset$ for $t\geq 0$.  There is a piece of a catenoid $Cat_p$ with $\partial Cat_p =\left(C_p+\frac{C}
{4}\theta\eE_3\right)\cup \left(C_p-\frac{C}{4}\theta \eE_3\right)$.  For $p \in P\backslash T_{C \theta}(\Omega^-\cup \Omega^+)$   $ Cat_{p}\cap \partial \Sigma=\emptyset$ while for $t$ large enough $Cat_{p+t\wW}\cap \Sigma=\emptyset$. Thus, the maximum principal ensures $\Sigma\subset T_{C\theta}(\Omega^-\cup \Omega^+)$.

Item \eqref{Factsitem6} also holds with $C$ as above.  If $D_r(q)\subset \Omega^-\cap \Omega^+$ with $r\geq C$ then $Cat_q\cap \Sigma=\emptyset$ and $\partial Cat_q\cap  W(\theta)=\emptyset.$   As $Cat_q$ is disjoint from $\Sigma$ there are distinct components,  $U_-$ and $U_+$ of $W(\theta)\backslash \left( \Sigma\cup Cat_q\right)$ so that $\Sigma\subset \partial U_\pm$.   Moreover, $\sigma_{\pm\theta}^\pm$ is not contractible in either $\overline{U}_-$ or $\overline{U}_+$.  Hence, as each region is mean convex in the sense of \cite{MeeksYau2} there are embedded stable minimal annuli $\Sigma_\pm\subset \overline{U}_\pm$ with $\partial \Sigma_\pm=\partial \Sigma$.  By Theorem \ref{MWThm} this occurs only if $\Sigma=\Sigma_-=\Sigma_+$.

Item \eqref{Factsitem7} follows for topological reasons.  Let $p$ be a point in $\Omega^-\cap \Omega^+$ so that $\Pi^{-1}(p)$ meets $\Sigma$ transversally.  Denote by  $\hat{\gamma}$ the component of $\Pi^{-1}(p)\cap \overline{W(\theta)}$ with endpoints $\set{p^\pm}=H_{\pm \theta}\cap \hat{\gamma}$.  Connect $p^+$ to $p^-$ in $\Real^3 \backslash \overline{W(\theta)}$ to obtain a closed  curve $\gamma$ that is linked with both components of $\partial \Sigma$. The linking number of $\partial \Sigma$ with $\gamma$ is zero and so $\gamma$ meets $\Sigma$ an even number of times.
\end{proof}

\section{Neck placement}

We wish to understand the position and size of the ``neck'' of an unstable minimal annulus. In order to do so it will be convenient to know  that certain minimal surfaces that arise as rescalings of minimal annuli as in Proposition \ref{FactsProp} are flat.  As the proof is somewhat technical, we defer it to the end of this section.

\begin{prop} \label{FlatProp}
Let $H^-$ and $H^+$ be two open half-planes in $P$ with $V=H^-\cap H^+$  a non-empty cone in $P$. Set $H_t^+=H^+ +t \eE_3$ for $t\in [1, \infty]$; here $H^+_\infty=\emptyset$.
Suppose $\Sigma$ is a minimal surface with $\partial \Sigma=\partial H^- \cup \partial H^+_t$ that satisfies,  for $C>1$,
\begin{enumerate}
%\item \label{FPitem1}  and ;
\item \label{FPitem3} $\int_{\Sigma} |A|^2 \leq 8 \pi$; 
 \item \label{FPitem2} $\Sigma$ is embedded and $Area(B_r(p)\cap \Sigma)\leq 2\pi r^2$;
 \item \label{FPitem6} $\Sigma\backslash \left(H^-\cup H^+_t\right)\subset T_{C t}(H^-\cup H^+)\cap \set{0<x_3<t}$;
  \item \label{FPitem4} If $p\in V$ and $D^+(p)=D_{2Ct}(p)\cap V$ then $\Pi(\Sigma)\cap D^+(p)   \neq \emptyset$;
 \item \label{FPitem5} If $t<\infty$ then $\Pi^{-1}(p)\cap \Sigma$ is an even number of points for generic $p\in V$.
\end{enumerate}
 Then, up to a rotation of $\Real^3$, $\Sigma=H^-\cup H^+_t$ or $\Sigma=H^-\cup (P+T\eE_3)$ where $T>0$.
\end{prop}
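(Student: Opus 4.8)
The strategy is to show that $\Sigma$ must be a union of two pieces each of which is a minimal surface with curvature bound coming from \eqref{FPitem3} and a subquadratic area bound from \eqref{FPitem2}, and then to use a suitable monotonicity/Bernstein-type argument to conclude each piece is a plane. First I would observe that by \eqref{FPitem6} the surface $\Sigma$ lies in a thin slab $\{0<x_3<t\}$ enlarged by a tubular neighborhood of the two half-planes, and that $\partial \Sigma = \partial H^- \cup \partial H^+_t$ consists of two parallel lines (the common edge line of $H^-$ and of $H^+$, translated vertically). The key structural input is \eqref{FPitem3}: the total curvature is at most $8\pi$. Since $\Sigma$ is an embedded minimal surface spanning two lines, I would use the Gauss--Bonnet argument as in Proposition \ref{FactsProp} together with the fact that $\partial \Sigma$ is a union of straight lines (geodesic curvature zero in $\Real^3$) to control the topology — indeed one expects $\int_\Sigma |A|^2$ to be a multiple of $4\pi$ and the slab confinement plus embeddedness to force the value to be $0$ or exactly $8\pi$; the value $0$ gives flatness immediately.

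The heart of the argument is to rule out, or rather to classify, the case $\int_\Sigma|A|^2 = 8\pi$. Here I would argue by a blow-down / tangent-cone-at-infinity analysis: because of the subquadratic (in fact linear-in-$Ct$) confinement in \eqref{FPitem6} and \eqref{FPitem4}, the surface is asymptotic at infinity to $H^- \cup H^+$ (or to $H^-$ alone together with a plane, if the "neck" escapes to infinity), and the only embedded minimal surface with total curvature $\le 8\pi$, bounded by two parallel lines, asymptotic to two half-planes, with a single catenoidal neck, would violate the embeddedness or the parity condition \eqref{FPitem5}. More concretely: if $\Sigma$ is not flat, the concentrated curvature produces, after rescaling about the point of concentration, a complete embedded minimal surface with total curvature $8\pi$ and finite topology — by the classification this must be a catenoid or a genus-zero surface with two ends. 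But a catenoidal neck joining $H^-$ to $H^+_t$ inside the slab $\{0<x_3<t\}$ cannot exist because its "waist" circle and its asymptotic behavior are incompatible with the flat half-plane boundary pieces lying in the planes $\{x_3=0\}$ and $\{x_3=t\}$; one checks the conormal/force balance $\FF$ along the generator of $H_1(\Sigma)$ using Stokes' theorem (as in the Preliminaries) and finds the vertical component of the force cannot be reconciled with the boundary data unless the neck degenerates. I would also use the half-plane comparison: reflecting $\Sigma$ across $\{x_3 = 0\}$ (matching the boundary piece $\partial H^-$) and across $\{x_3=t\}$ and applying the strong maximum principle, exactly as in the proof of item \eqref{Factsitem4}, to pin the geometry.

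In more detail, the reflection argument goes: reflect $\Sigma$ across the plane containing $H^-$; since $\partial H^-\subset\Sigma$ is a line of the boundary, the union $\Sigma\cup\widehat\Sigma$ is a minimal surface (smooth across $\partial H^-$ by the Schwarz reflection principle, after checking $\Sigma$ meets that plane orthogonally along $\partial H^-$ — which follows because $\partial H^-$ is a free boundary-type configuration forced by the extremal/convex-hull position). Doing the same at $\{x_3=t\}$, and then sliding translated copies as in Proposition \ref{FactsProp}, forces $\Sigma$ to be invariant under the reflections, hence to contain the full planes $P$ and $P+t\eE_3$ — but then by the maximum principle and connectedness $\Sigma$ is exactly $H^-\cup H^+_t$, unless the neck has run off to $x_3$-height $T$ with $1\le T$ (the case $t=\infty$, $H^+_\infty=\emptyset$), giving $\Sigma = H^-\cup(P+T\eE_3)$. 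The value $T\ge 1$ comes from the lower bound $t\ge1$ combined with the confinement \eqref{FPitem6}.

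\textbf{Main obstacle.} The delicate point — and where I expect the real work to lie — is ruling out exotic non-flat embedded minimal surfaces with $\int|A|^2\le 8\pi$ bounded by two parallel lines and trapped in a slab, i.e. making the blow-up/classification step rigorous. The total curvature bound alone permits a catenoidal piece, and one must genuinely exploit the linear-in-$t$ (not merely bounded) confinement of \eqref{FPitem6}, the parity condition \eqref{FPitem5}, and the prescribed flat boundary to exclude it; combining the force/flux computation with the moving-plane reflection to get a contradiction is the technical crux, and is presumably why the author defers the proof to the end of the section.
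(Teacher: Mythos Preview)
Your proposal identifies some of the right ingredients (asymptotic structure at infinity, Schwarz reflection, force balancing) but misidentifies how they fit together, and the main mechanism you describe will not work.

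First, the moving-plane/reflection argument you sketch is not sound. You claim $\Sigma$ meets $\{x_3=0\}$ orthogonally along $\partial H^-$ by some free-boundary principle; nothing in the hypotheses forces this, and in fact the conormal along $\partial H^-$ is merely orthogonal to the \emph{line} $\partial H^-$, not to the plane $P$. So reflecting across $P$ does not produce a smooth minimal surface, and the sliding argument from Proposition~\ref{FactsProp} does not transplant here. In the paper Schwarz reflection is used only across the boundary \emph{lines} $\partial H^\pm$, and only to extend the graphical ends $\Sigma^\pm$ to single-valued minimal graphs over $P\setminus D_R$ in order to read off an asymptotic expansion
\[
u^\pm(\xX)=\delta^\pm \pm \lambda^\pm\,\frac{\xX\cdot\vV^\pm}{|\xX|^2}+O(|\xX|^{-2}),
\]
with $\vV^\pm$ the outward normal to $H^\pm$ in $P$. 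This expansion, not a blow-up classification of catenoids, is the technical core.

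Second, the force contradiction is horizontal, not vertical. You look for an incompatibility in the $\eE_3$-component of $\FF$; that is not where the obstruction lives. The paper computes $\FF^\pm$ along $\hat\sigma_r^\pm=\sigma_r^\pm\cup L_r^\pm$ (arc in $\Sigma^\pm$ over $\partial D_r$ closed up by a segment of $\partial H^\pm$) and finds
\[
\FF^\pm=\alpha^\pm\vV^\pm \mp \beta^\pm\eE_3+O(r^{-1}),\qquad \alpha^\pm>0,
\]
where the term $\alpha^\pm\vV^\pm$ comes precisely from integrating the conormal along the straight boundary segment $L_r^\pm$. Balancing $\FF^-+\FF^+=0$ then forces $\alpha^-\vV^-+\alpha^+\vV^+=0$, i.e.\ $\vV^+=-\vV^-$; but this is exactly what the hypothesis ``$V=H^-\cap H^+$ is a non-empty cone'' forbids. (When $t=\infty$ the balancing fails already because $\FF^+$ is vertical while $\FF^-$ has a nonzero $\vV^-$ component.) The non-degeneracy $\lambda^->0$ (equivalently $\alpha^->0$) is what encodes ``$\Sigma$ is connected and not already flat,'' and the cone hypothesis on $V$ is what turns the horizontal balance into a contradiction. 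Your outline never uses the cone hypothesis, which is a sign that the argument is missing the actual mechanism.
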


We next introduce a definition allowing us to quantify the location and size of a neck:
\begin{defn}
For a fixed $C>0$ and surface $\Sigma$ we say that $(p,s)\in \Sigma\times \Real^+$ is a \emph{$C$ blow-up pair} provided 
\begin{enumerate}
 \item $B_{Cs}(p)\cap \partial \Sigma=\emptyset$
 \item $\sup_{B_{Cs}(p)\cap \Sigma} |A|^2 \leq 4 |A|^2(p) =4 s^{-2}.$
\end{enumerate}
\end{defn}
Blow-up pairs can always be found when the curvature is large:
\begin{lem}\label{BlowUpLem}
  Fix $C>0$ and Suppose $\Sigma$ is a compact surface in $\Real^3$ so that 
\begin{equation*}
B_{r_0}(p)\cap \partial \Sigma =\emptyset
\end{equation*}
 and 
\begin{equation*}
\sup_{B_{r_0/2}(p)} |A|^2 \geq 16 C^2 r_0^{-2}. 
\end{equation*}
  Then there is a point $q$ and scale $s>0$ so that $B_s(q)\subset B_{r_0}(p)$ and $(q,s)$ is a $C$ blow-up pair.
\end{lem}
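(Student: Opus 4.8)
The plan is to run a standard point-selection (blow-up) argument à la Schoen/Colding–Minicozzi. The idea is to look for the point where $|A|^2$ is largest relative to the distance to $\partial\Sigma$, but since that supremum may not be attained at an interior point with the right weighting, I would instead iterate: start at a point where the curvature is already somewhat large, and keep jumping to nearby points of still larger curvature, each time shrinking the ball, until the process stabilizes at a genuine $C$ blow-up pair. The function to track is $F(x) = |A|^2(x)\,d(x)^2$ where $d(x)$ is (a fixed fraction of) the distance from $x$ to the complement of the current ball; one shows this cannot keep increasing forever because $\Sigma$ is compact and the scales $s$ are bounded below along the iteration.

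Here are the steps. First I would reduce to a clean setup: by hypothesis $\sup_{B_{r_0/2}(p)}|A|^2 \ge 16C^2 r_0^{-2}$, so pick $q_0 \in \overline{B_{r_0/2}(p)}$ with $|A|^2(q_0) \ge 16 C^2 r_0^{-2}$ (or close to the sup), and set $s_0 = |A|^{-1}(q_0) \le \tfrac{r_0}{4C}$, so that $B_{Cs_0}(q_0) \subset B_{r_0/2 + Cs_0}(p) \subset B_{r_0}(p)$ and hence $B_{Cs_0}(q_0)\cap\partial\Sigma=\emptyset$. If $(q_0,s_0)$ already satisfies $\sup_{B_{Cs_0}(q_0)\cap\Sigma}|A|^2 \le 4|A|^2(q_0)$ we are done. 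Otherwise there is $q_1 \in B_{Cs_0}(q_0)\cap\Sigma$ with $|A|^2(q_1) > 4|A|^2(q_0) = 4 s_0^{-2}$; set $s_1 = |A|^{-1}(q_1) < \tfrac{1}{2} s_0$ and repeat. Inductively this produces points $q_k$ and scales $s_k$ with $s_{k} < 2^{-k} s_0$ and $|q_{k+1} - q_k| < C s_k$, so $\sum_k |q_{k+1}-q_k| < C s_0 \sum_k 2^{-k} = 2C s_0 \le \tfrac{r_0}{2}$; combined with $|q_0 - p| \le r_0/2$ this keeps every $q_k$, and every ball $B_{Cs_k}(q_k)$, inside $B_{r_0}(p)$, so the no-boundary condition $B_{Cs_k}(q_k)\cap\partial\Sigma=\emptyset$ persists throughout. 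The curvatures $|A|^2(q_k)$ are strictly increasing, and by compactness of $\Sigma$ and the smoothness (hence local boundedness) of $|A|^2$ the sequence $q_k$ has a convergent subsequence along which $|A|^2$ would be forced to blow up — impossible. Hence the process terminates after finitely many steps at some $(q,s) = (q_N, s_N)$, which by construction is a $C$ blow-up pair with $B_s(q) \subset B_{Cs}(q)\subset B_{r_0}(p)$.

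The main obstacle — really the only subtle point — is making sure the iteration cannot run forever, i.e. that the location and scale stay controlled so the process is forced to stop. The geometric-series estimate $\sum |q_{k+1}-q_k| \le 2Cs_0 \le r_0/2$ is exactly what pins the $q_k$ inside $B_{r_0}(p)$ (so all the ambient hypotheses keep applying), and the doubling $s_{k+1} < \tfrac12 s_k$ forces $|A|^2(q_k) \to \infty$, which contradicts continuity of $|A|^2$ on the compact set $\overline{B_{r_0}(p)}\cap\Sigma$ (note $\overline{B_{r_0}(p)}\cap\Sigma$ has no boundary points of $\Sigma$ in its interior and $|A|$ is continuous up to $\overline\Sigma$, so it is bounded there). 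One should double-check the constant bookkeeping — that $16C^2$ in the hypothesis is the right threshold to guarantee the very first ball $B_{Cs_0}(q_0)$ already fits inside $B_{r_0}(p)$ with room to spare for all subsequent shrinking — but this is routine. No compactness or regularity theory beyond smoothness of $\Sigma$ is needed; in particular none of Proposition \ref{FactsProp} is invoked here, only elementary estimates.
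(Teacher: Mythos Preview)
The paper states this lemma without proof, treating it as the standard point-selection (``point-picking'') lemma familiar from, e.g., Choi--Schoen or Colding--Minicozzi. Your proposal is exactly that standard argument and is carried out correctly: the geometric-series bound $\sum_{j\ge 0} C s_j < 2C s_0 \le r_0/2$ together with $|q_0-p|\le r_0/2$ keeps every $B_{Cs_k}(q_k)$ inside $B_{r_0}(p)$, and compactness of $\overline\Sigma$ (hence boundedness of $|A|$) forces the iteration to terminate. One cosmetic remark: at the very end you write $B_s(q)\subset B_{Cs}(q)$, which presumes $C\ge 1$; this is always the case in the paper's applications (indeed $C>100$ in Proposition~\ref{CatTotCurvProp}), and for $C<1$ the lemma's stated conclusion $B_s(q)\subset B_{r_0}(p)$ is arguably a slip in the paper rather than a defect in your argument.
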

The larger the constant $C$, the better $\Sigma$ is modeled near the blow-up pair on a complete surface:
 \begin{prop} \label{CatTotCurvProp}
 Given $1>\epsilon>0$ there is a $C=C(\epsilon)>100$ such that if $\Sigma$ is an oriented minimal surface and $(p,s)$ is a $C$ blow-up pair in $\Sigma$ then
\begin{enumerate}
 \item \label{CTCitem1} $\int_{B_{Cs}(p)\cap \Sigma} |A|^2 \geq 8\pi -\epsilon$.
 \item \label{CTCitem2} If, in addition, $\Sigma$ is embedded and $\int_\Sigma |A|^2 \leq 8\pi$ then  $\int_{\partial B_{100 s}(p)\cap \Sigma} |A|\leq \frac{1}{10}$ and if $U$ is a component of $B_{100s}(p)\backslash \Sigma$ then $Vol(U)\geq s^3$.
\end{enumerate}
 \end{prop}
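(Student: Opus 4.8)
The plan is to argue by contradiction and compactness, leveraging the fact that a large blow-up constant $C$ forces the rescaled surface to converge to a complete, embedded minimal surface with total curvature at most $8\pi$. Suppose the first statement fails: then there is $\epsilon_0>0$ and a sequence $C_j\to\infty$, oriented minimal surfaces $\Sigma_j$, and $C_j$ blow-up pairs $(p_j,s_j)$ with $\int_{B_{C_j s_j}(p_j)\cap\Sigma_j}|A|^2 < 8\pi-\epsilon_0$. Rescale and translate so that $p_j\mapsto 0$ and $s_j\mapsto 1$; call the new surfaces $\tilde\Sigma_j$. By the defining properties of a $C_j$ blow-up pair we have $|A_{\tilde\Sigma_j}|^2\le 4$ on $B_{C_j}(0)\cap\tilde\Sigma_j$ with equality forced to be $4$ at $0$, and $B_{C_j}(0)\cap\partial\tilde\Sigma_j=\emptyset$; since $C_j\to\infty$, standard interior curvature estimates and the uniform area bound (item \eqref{FPitem2}-type bounds are not assumed here, but the curvature bound plus a standard monotonicity/area argument for minimal surfaces with bounded curvature suffices on each fixed ball) give, after passing to a subsequence, smooth local convergence of $\tilde\Sigma_j$ on every compact subset of $\Real^3$ to a complete, non-flat (because $|A|^2(0)=4\neq 0$) minimal surface $\tilde\Sigma_\infty$. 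By lower semicontinuity of total curvature under smooth local convergence, $\int_{\tilde\Sigma_\infty}|A|^2 < 8\pi-\epsilon_0$. But a complete, non-flat minimal surface in $\Real^3$ has $\int|A|^2\ge 8\pi$ — this is the classical gap theorem (the only complete minimal surfaces with total curvature exactly $8\pi$ being the catenoid and Enneper's surface, and nothing strictly between $0$ and $8\pi$); the contradiction proves \eqref{CTCitem1}.

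For \eqref{CTCitem2}, fix $\epsilon$ small (to be chosen) and the corresponding $C=C(\epsilon)$ from \eqref{CTCitem1}; assume in addition $\Sigma$ embedded with $\int_\Sigma|A|^2\le 8\pi$. Combining \eqref{CTCitem1} with the global bound gives $\int_{\Sigma\setminus B_{Cs}(p)}|A|^2\le\epsilon$. I would run the same blow-up/compactness argument, now using embeddedness: the rescaled surfaces $\tilde\Sigma_j$ (blow-up constant $\to\infty$) converge smoothly and with multiplicity one on compact sets — multiplicity one because embeddedness plus the total curvature bound rules out sheeting — to a complete embedded minimal surface of total curvature $\le 8\pi$ that is non-flat, hence exactly the catenoid (Enneper's surface is not embedded). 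On the catenoid $\mathcal{C}$ normalized so that the point of maximal curvature has $|A|^2=4$, one computes directly that $\int_{\partial B_{100}(0)\cap\mathcal{C}}|A|\,ds$ is some explicit constant $c_0$; since the catenoid's curvature decays and $100$ is comfortably large compared to the neck scale $1$, $c_0<\tfrac1{20}$, say. By the smooth convergence, for $j$ large $\int_{\partial B_{100s_j}(p_j)\cap\tilde\Sigma_j}|A| \to c_0 < \tfrac1{10}$, giving the first assertion of \eqref{CTCitem2} (phrased back on the original unrescaled surface). Similarly, each component $U$ of $B_{100}(0)\setminus\mathcal{C}$ has volume bounded below by a fixed positive constant $v_0$ (the catenoid separates $B_{100}$ into pieces of definite size); smooth convergence transfers a lower bound $Vol(U)\ge \tfrac12 v_0 \ge 1$ to $\tilde\Sigma_j$ for large $j$ — provided the normalization is chosen so the neck scale is $1$, which after rescaling back means $Vol(U)\ge s^3$ on the original surface.

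The main obstacle I anticipate is ensuring the compactness step is clean: one must (i) get uniform local area bounds to extract a smoothly convergent subsequence — here the curvature bound $|A|^2\le 4$ on balls of radius $C_j\to\infty$ plus minimality gives this, but it should be stated carefully, perhaps invoking the monotonicity formula or simply that bounded curvature graphs over large balls have controlled area; (ii) guarantee multiplicity-one convergence in part \eqref{CTCitem2}, which genuinely uses embeddedness together with the $8\pi$ total-curvature ceiling (two or more sheets would contribute $\ge 2\cdot 4\pi$ of curvature from two catenoid necks or force a plane, contradicting non-flatness at the center); and (iii) identify the limit — for \eqref{CTCitem1} invoking the total-curvature gap theorem, and for \eqref{CTCitem2} invoking the classification of complete embedded minimal surfaces with $\int|A|^2\le 8\pi$ as being the catenoid (equivalently, López–Ros or Schoen's characterization of the catenoid). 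Once the limit is pinned down, both quantitative conclusions are continuity statements for integrals of $|A|$ and for volumes of complementary components under smooth convergence on the fixed compact set $\overline{B_{100s}(p)}$, which are routine. I would be slightly careful that $\partial B_{100s}(p)\cap\Sigma$ is a nice curve for the boundary-integral statement — generic choice of radius near $100s$, or simply noting transversality holds for the limit and persists, handles this.
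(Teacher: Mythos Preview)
Your proposal is correct and follows essentially the same contradiction--compactness--classification route as the paper's proof. One small point the paper makes explicit and you gloss over: after rescaling, the blow-up point $p$ satisfies $|A|(p)=1$ (since $s=1$), so on the limit catenoid it is \emph{not} in general the neck point where $|A|$ is maximal; the paper observes that such a point nonetheless lies in $B_{10}(0)$ (with $0$ the center of the catenoid), so the ball $B_{100}(p)$ is still centered close enough to the neck for the explicit integral and volume estimates to go through.
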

\begin{proof}
 Let us prove Item \eqref{CTCitem1}.  We proceed by contradiction and so fix an $1>\epsilon>0$. Suppose that $\Sigma_i$ was a sequence of counter-examples. By translating and scaling we may assume that $(0,1)$ are $C_i$ blow-up pairs in $\Sigma_i$ with $C_i\to \infty $ and so that $\int_{B_{C_i}\cap \Sigma} |A|^2 < 8\pi -\epsilon$. As $C_i\to \infty$, up to passing to a subsequence the $\Sigma_i$ converge smoothly on compact subsets of $\Real^3$--possibly with multiplicity--to a complete proper orientable minimal surface $\Sigma$ with $0\in \Sigma$ satisfying $|A|(0)=1$ and $\int_\Sigma |A|^2\leq 8\pi -\epsilon$.  The total curvature bound implies that the Gauss map of $\Sigma$ misses an open set of $\mathbb{S}^2$ and so $\Sigma$ is a plane, contradicting the curvature lower bound at $0$.

 Item \eqref{CTCitem2} follows in a similar manner.  One needs two facts: First, the only non-flat embedded minimal surface of total curvature $8\pi$ is the catenoid.  Second if $Cat$ is a vertical catenoid centered at $0$ and normalized so that $\sup_{Cat} |A|\leq 4$ then a point $p'\in Cat$ with $|A|(p')=1$ satisfies $p'\in B_{10}(0)$.  Straightforward calculations give  $\int_{\partial B_{100 }(p')\cap Cat} |A|\leq \frac{1}{10}$ and that any component $U$ of $B_{100s}(p')\backslash Cat$ satisfies $Vol(U)\geq 1$.
\end{proof}

When the angle is small enough there is always a blow-up pair:
\begin{prop}\label{BlowUpProp}
 Fix $C>0$ and convex domains $\Omega^\pm\subset H$ so that $\partial \Omega^\pm=\sigma^\pm$ are $C^{2, \alpha}$ curves. There is a $\theta_0=\theta_0(C,\Omega^1, \Omega^2)$ such that any unstable minimal annulus  $\Sigma$ with $\partial \Sigma =\sigma^1_{-\theta}\cup \sigma^2_{\theta}$ and $0<\theta<\theta_0$ contains a $C$ blow-up pair $(p,s)$ in $\Sigma$.
\end{prop}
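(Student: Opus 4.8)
The plan is to argue by contradiction: if the proposition fails, there is a sequence $\theta_i\downarrow 0$ and unstable minimal annuli $\Sigma_i$ with $\partial\Sigma_i=\sigma^1_{-\theta_i}\cup\sigma^2_{\theta_i}$, none containing a $C$ blow-up pair. The contrapositive of Lemma \ref{BlowUpLem} then says that whenever $B_{r_0}(p)\cap\partial\Sigma_i=\emptyset$ one has $\sup_{B_{r_0/2}(p)\cap\Sigma_i}|A|^2<16C^2r_0^{-2}$; letting $r_0\uparrow\dist(p,\partial\Sigma_i)$ gives the scale-invariant bound $|A|(p)\le 4C\,\dist(p,\partial\Sigma_i)^{-1}$ on all of $\Sigma_i$. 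The strategy is to use this, together with the fact that $\Sigma_i$ collapses into $P$, to show that $\Sigma_i$ is a disjoint union of nearly horizontal graphs, which forces it to be disconnected --- contradicting that it is an annulus.

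First I would feed in Proposition \ref{FactsProp}: $\int_{\Sigma_i}|A|^2\le 8\pi$, the area ratios of $\Sigma_i$ are $\le 2\pi$, $\Sigma_i\subset W(\theta_i)\cap T_{C'\theta_i}(\Omega^1\cup\Omega^2)$ (so $\Sigma_i$ collapses into $P$ and $\Pi(\partial\Sigma_i)$ lies $O(\theta_i)$-close to $\sigma^1\cup\sigma^2$), and $\Omega^1\cap\Omega^2\ne\emptyset$. Fix a compact set $K\subset(\Omega^1\cup\Omega^2)\setminus(\sigma^1\cup\sigma^2)$. For $i$ large, every point of $\Sigma_i$ lying over $K$ is at a definite \emph{extrinsic} distance from $\partial\Sigma_i$, so the bound above furnishes a uniform curvature bound there; combined with the slab confinement $|x_3|\le C'\theta_i$ and embeddedness, $\Sigma_i$ restricted over $K$ is, for $i$ large, a finite disjoint union of minimal graphs over $K$ of gradient $<\frac1{10}$. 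In particular $\Pi|_{\mathrm{int}\,\Sigma_i}$ is a local diffeomorphism over such $K$.

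The subtle region is $\Pi^{-1}(\sigma^1\cup\sigma^2)$: where, say, $\sigma^1$ passes through $\Omega^2$, sheets of $\Sigma_i$ that are genuinely interior still lie within $O(\theta_i)$ of $\partial\Sigma_i$ extrinsically, so Lemma \ref{BlowUpLem} no longer controls the curvature and, a priori, a catenoidal neck joining two sheets could form there. To rule this out I would rescale $\Sigma_i$ by $\theta_i^{-1}$ about a base point over $\sigma^1\cup\sigma^2$, after a translation keeping the surface bounded. Over a transverse crossing point of $\sigma^1$ and $\sigma^2$ the rescaled boundary curves converge to two straight lines in distinct horizontal planes --- the height gap being twice the distance of the base point to $\ell_2$ --- i.e. to a configuration $\partial H^-\cup\partial H^+_t$ as in Proposition \ref{FlatProp} (the normalization $t\in[1,\infty]$ being arranged by an initial rescaling of $\Omega^1,\Omega^2$, which affects neither hypothesis nor conclusion); over an arc of $\sigma^1$ interior to $\Omega^2$ one gets a single such line plus a parallel plane, the case $t=\infty$. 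The hypotheses of Proposition \ref{FlatProp} for the rescaled limit are precisely what Proposition \ref{FactsProp} provides: the total-curvature bound $8\pi$ and the area-ratio bound, the confinement to $T_{Ct}(H^-\cup H^+)\cap\{0<x_3<t\}$, the fact that $\Pi$ of the surface meets every disk of the prescribed radius in $V$ (here one uses that $\Sigma_i$ is unstable, hence not a stable annulus), and the evenness of the number of points of $\Pi^{-1}(p)\cap\Sigma$ for generic $p\in V$. Proposition \ref{FlatProp} then forces the rescaled limit to be flat --- a pair of disjoint parallel half-planes, together with at most one parallel plane --- so no neck forms near $\Pi^{-1}(\sigma^1\cup\sigma^2)$ either; combining this with boundary regularity for the uniformly $C^{2,\alpha}$, asymptotically planar boundary curves, I would conclude that, along this subsequence, $\Pi|_{\mathrm{int}\,\Sigma_i}$ is a local diffeomorphism on \emph{all} of $\mathrm{int}\,\Sigma_i$ for $i$ large.

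The contradiction is then immediate. Starting from a boundary collar of $\sigma^1_{-\theta_i}$ and continuing it as a graph, the resulting piece $\Gamma^1_i\subset\overline{\Sigma_i}$ cannot fold over (since $\Pi|_{\mathrm{int}\,\Sigma_i}$ is a local diffeomorphism) and can terminate only along $\partial\Sigma_i$, so $\Gamma^1_i$ is a graph over the simply connected region bounded by $\Pi(\sigma^1_{-\theta_i})$ with boundary the single circle $\sigma^1_{-\theta_i}$; hence $\Gamma^1_i$ is both open and closed in $\overline{\Sigma_i}$, forcing $\Gamma^1_i=\overline{\Sigma_i}$ --- impossible, since $\overline{\Sigma_i}$ has two boundary circles while $\Gamma^1_i$ has one. (Equivalently: rescaling the two graphs over $\Omega^1$ and $\Omega^2$ by $\theta_i^{-1}$, they converge to the harmonic functions with boundary values $x_1>0$ and $-x_1<0$, hence are eventually of strictly opposite sign on $\overline{\Omega^1\cap\Omega^2}$, hence disjoint, so $\Sigma_i$ splits as a disjoint union of two discs.) In either case $\Sigma_i$ is disconnected, contradicting that it is an annulus. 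I expect the main obstacle to be the analysis near $\Pi^{-1}(\sigma^1\cup\sigma^2)$ --- the $\theta_i^{-1}$-rescaling and the verification of the hypotheses of Proposition \ref{FlatProp} for the limit --- since there the scale-invariant estimate from Lemma \ref{BlowUpLem} is by itself insufficient and the flatness statement of Proposition \ref{FlatProp} must be brought in.
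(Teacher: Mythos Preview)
Your argument is plausible but takes a considerably longer route than the paper's, and the endgame is where it is most vulnerable.

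The paper's proof is a single blow-up. Having obtained the same scale-invariant estimate $|A_i|(p)\le 4C\,\dist(p,\partial\Sigma_i)^{-1}$, it picks $p_i\in\overline{\Sigma}_i$ realizing $\lambda_i=\sup_{\overline{\Sigma}_i}|A_i|$ and rescales $\hat{\Sigma}_i=\lambda_i(\Sigma_i-p_i)$. By construction $|A|\le 1$ on $\hat{\Sigma}_i$ and $|A|(0)=1$, so any subsequential smooth limit $\hat{\Sigma}$ is \emph{non-flat}. The scale-invariant estimate forces $\dist(0,\partial\hat{\Sigma}_i)\le 4C$, so the boundary survives in the limit and, being a blow-up of fixed convex $C^{2,\alpha}$ curves, consists of one or two straight lines. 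The items of Proposition~\ref{FactsProp} pass to the limit to give exactly the hypotheses of Proposition~\ref{FlatProp}, which then says $\hat{\Sigma}$ is flat---an immediate contradiction. There is no separate treatment of the region over $\sigma^1\cup\sigma^2$, no covering by base points, and no topological step.

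Your approach instead rescales by $\theta_i^{-1}$ and uses Proposition~\ref{FlatProp} not to contradict non-flatness but to certify flatness of the limit, and then closes with a global graphicality/topology argument. This is a genuinely different use of the same lemma; it buys you the extra geometric information that the $\Sigma_i$ are globally graphical, at the cost of (i) a uniformity issue---you need the conclusion of the $\theta_i^{-1}$-rescaling to hold simultaneously over all base points along $\sigma^1\cup\sigma^2$, which requires a further contradiction/compactness step you only gesture at---and (ii) the final disconnection argument. The latter is the weak point: the claim that the continued collar $\Gamma^1_i$ has boundary \emph{only} $\sigma^1_{-\theta_i}$ is not obvious, since the continuation could in principle run into $\sigma^2_{\theta_i}$ over $\Omega^1\cap\Omega^2$; your parenthetical harmonic-limit version is cleaner, but still presupposes that there are exactly two global sheets over $\Omega^1$ and $\Omega^2$ before you have shown it. These can be fixed, but the paper's max-curvature blow-up avoids them entirely.
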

\begin{proof}
 We proceed by contradiction. Let $\theta_i\to 0$ and $\Sigma_i$ unstable minimal annuli with 
 $\partial \Sigma_i=\sigma^1_{-\theta_i}\cup \sigma^2_{\theta_i}$
and so that each $\Sigma_i$ contains no $C$ blow-up pair.  By Lemma \ref{BlowUpLem}, this occurs only if $\sup_{B_{r/2}(p)\cap \Sigma_i} |A_i|^2 <16 C^2 r^{-2}$ when $B_r(p)\cap \partial \Sigma_i=\emptyset$.  That is,
\begin{equation} \label{CurvEstBdry}
 |A_i|^2 (p)< 16 C^2 \dist (p, \partial \Sigma_i)^{-2}.
\end{equation}

Let $p_i\in \overline{\Sigma}_i$ be a point of maximum curvature of $\Sigma_i$, i.e.
\begin{equation*}
\lambda_i= \sup_{\overline{\Sigma}_i} |A_i|=|A_i|(p_i).
\end{equation*}
Such a point exists since $\partial \Sigma_i$ is of class $C^2$.
Now consider the surface
\begin{equation*}
 \hat{\Sigma}_i=\lambda_i (\Sigma_i -p_i)
\end{equation*}
so $0\in \hat{\Sigma}_i$, $\sup_{\hat{\Sigma}_i} |A_i|\leq 1$ and $|A_i|(0)=1$.
By passing to a subsequence, the uniform curvature estimate implies that $\hat{\Sigma}$ converges smoothly to a surface  $\hat{\Sigma}$ which is a smooth non-compact minimal surface with boundary. By \eqref{CurvEstBdry}, the boundary is non-empty. Indeed, as $\partial \hat{\Sigma}_i$ is obtained from fixed closed $C^1$ curves by limits of rigid motions and homothetic blow-ups,  $\partial \hat{\Sigma}$ is either one or two disjoint lines.

We claim this is impossible.  Indeed, by Proposition \ref{FactsProp}, $\hat{\Sigma}$ satisfies all of the conditions of Proposition \ref{FlatProp} and so is flat, contradicting the curvature lower bound at $0$.
\end{proof}

%The position and scale of a blow-up pair will allow us to locate the position and size of the neck for sequences of minimal annuli.
Away from a blow-up pair our annuli are graphs:
\begin{prop}\label{NeckLocationProp}
 There exists a $C>0$ so: Suppose that $\Omega^\pm\subset H$ are fixed convex domains with $\partial \Omega^\pm=\sigma^\pm$ of class $C^{2,\alpha}$. If $\Sigma$ is a minimal annulus with $\partial \Sigma= \sigma^-_{-\theta}\cup \sigma^+_\theta$ for $0<\theta<\frac{\pi}{4}$ and $(p,s)$ is a $C$ blow-up pair in $\Sigma$ then 
 there are functions $ u_\pm\geq 0$ defined on $\hat{\Omega}^\pm_{\pm\theta}:=\Omega^{\pm}_{\pm\theta}\backslash B_{Cs}(p_{\pm\theta})$  so that 
the following holds
\begin{enumerate}
 \item \label{NLitem1} $|\nabla u^\pm|\leq \frac{1}{10}$
 \item \label{NLitem2}$R_{\pm \theta} \left(\Gamma_{\mp u^\pm}\right)=\Sigma^\pm \subset \Sigma$
 \item \label{NLitem3}$s\leq C_0 \theta$ for some $C_0=C_0(\Omega^-,\Omega^+)$.
\end{enumerate}
Here $p_{\pm\theta}$ is the nearest point to $p$ in $H_{\pm \theta}$.
\end{prop}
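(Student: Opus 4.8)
The plan is to choose $C$ large enough that the total curvature of $\Sigma$ concentrates entirely in the ball $B_{Cs}(p)$, so that outside this ball the surface is nearly flat, and then to organize this flatness into two graphs. Fix a small $\eps\in(0,1/10)$ (small enough that the estimates below close up) and let $C=C(\eps)>100$ be the constant of Proposition~\ref{CatTotCurvProp}; this is the $C$ we use. By Proposition~\ref{FactsProp}, item~\eqref{Factsitem1}, $\int_\Sigma|A|^2\le 8\pi$, and by Proposition~\ref{CatTotCurvProp}, item~\eqref{CTCitem1}, $\int_{B_{Cs}(p)\cap\Sigma}|A|^2\ge 8\pi-\eps$; hence $\int_{\Sigma\setminus B_{Cs}(p)}|A|^2\le\eps$. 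I would first note that the mere existence of such a blow-up pair forces $\theta$ to be small: by a standard compactness argument (essentially the proof of Proposition~\ref{CatTotCurvProp}), after rescaling by $s^{-1}$ about $p$ the surface is, on a large ball, $C^2$-close to a catenoid of neck size $\asymp s$; continuing its two ends outward through the region of small total curvature, they limit onto $\sigma^\pm_{\pm\theta}\subset H_{\pm\theta}$, so they are nearly parallel to $H_\theta$ and to $H_{-\theta}$ respectively. Since the two ends of a catenoid are asymptotically parallel, the unit normals $R_{\pm\theta}\eE_3$ of $H_{\pm\theta}$ nearly coincide; as they differ by $2\sin\theta$, this gives $\theta=O(\eps')$ with $\eps'=\eps'(\eps)$. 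So we may assume $\theta$ is as small as we like.

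Next I would extract the graphical pieces. Since $B_{Cs}(p)\cap\partial\Sigma=\emptyset$ and the neck region $B_{Cs}(p)\cap\Sigma$ is catenoid-like, it is an annulus whose boundary consists of two nearly round circles in $\partial B_{Cs}(p)$; a short Euler-characteristic computation then shows $\Sigma\setminus B_{Cs}(p)$ has exactly two components $\Sigma^-,\Sigma^+$, each an annulus, with $\partial\Sigma^\pm=\sigma^\pm_{\pm\theta}\cup c^\pm$ where $c^\pm\subset\partial B_{Cs}(p)$, and each of total curvature at most $\eps$. By standard curvature estimates (small total curvature in the interior, $C^{2,\alpha}$ boundary regularity near the fixed curves $\sigma^\pm_{\pm\theta}$) together with the a priori confinement $\Sigma\subset W(\theta)\cap\{|x_3|\le\tfrac C4\sin\theta\}$ of Proposition~\ref{FactsProp}, item~\eqref{Factsitem5}, the surface $\Sigma^\pm$ lies in a slab of width $O(\theta)$ about $P$ with no interior curvature concentration; this pins its unit normal near $\pm\eE_3$ at points away from the boundary, and—since the total curvature bounds the total variation of the Gauss map—near $\pm\eE_3$ everywhere on $\Sigma^\pm$. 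Hence $\Sigma^\pm$ is a graph over $P$ with gradient $\le\tfrac1{10}$; as $\theta$ is small this is the same as a graph $R_{\pm\theta}(\Gamma_{\mp u^\pm})$ over $\Omega^\pm_{\pm\theta}$, and $\Sigma\subset W(\theta)$ forces $u^\pm\ge 0$. Its domain is $\Omega^\pm_{\pm\theta}\setminus B_{Cs}(p_{\pm\theta})$: the outer boundary of the annular graph is $\sigma^\pm_{\pm\theta}=\partial\Omega^\pm_{\pm\theta}$, while the inner boundary $c^\pm$ projects to a nearly round circle of radius $\asymp Cs$ about $p_{\pm\theta}$ (which also shows $p_{\pm\theta}\in\Omega^\pm_{\pm\theta}$).

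Finally I would prove item~\eqref{NLitem3}: the catenoid-like neck region has neck radius $\asymp s$ (the rescaling about $p$ has $|A|(0)=1$), and to carry total curvature $\ge 8\pi-\eps$ it must extend in its axial direction by at least $c\,s\log\tfrac1\eps\ge c\,s$. Its axis lies within $O(\theta)$ of $\eE_3$, since its two ends are the nearly horizontal graphs $\Sigma^\pm$. Therefore $\Sigma$ has $x_3$-extent at least $c\,s$, and, combined with $\Sigma\subset\{|x_3|\le\tfrac C4\sin\theta\}$ from Proposition~\ref{FactsProp}, item~\eqref{Factsitem5}, this yields $s\le C_0\theta$ with $C_0=C_0(\Omega^-,\Omega^+)$ (the implied constants involve $\diam\Omega^\pm$).

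The main obstacle will be the middle step: passing from ``small total curvature in a thin slab'' to ``graph with gradient $\le\tfrac1{10}$ all the way up to the boundary curves $\sigma^\pm_{\pm\theta}$.'' Away from $\partial\Sigma^\pm$ the thin slab forces the normal to be nearly vertical and the small total curvature propagates this; but in the collar near $\sigma^\pm_{\pm\theta}$, where the second fundamental form is only $O(1)$ (comparable to the curvature of the fixed curve $\sigma^\pm$), one must use that $\Sigma^\pm$ lies on one side of the plane $H_{\pm\theta}$ containing the convex curve $\sigma^\pm_{\pm\theta}$, together with a geodesic-curvature comparison and Gauss--Bonnet as in the proof of Proposition~\ref{FactsProp}, item~\eqref{Factsitem1}, to bound the conormal angle along $\sigma^\pm_{\pm\theta}$ by $O(\sqrt\eps+\theta)$.
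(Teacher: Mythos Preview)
Your overall plan matches the paper's: pick $C=C(\eps)$ from Proposition~\ref{CatTotCurvProp}, use $\int_\Sigma|A|^2\le 8\pi$ to force all but $\eps$ of the total curvature into $B_{Cs}(p)$, and argue that the two components $\Sigma^\pm$ of the complement are flat graphs. The differences are in how the two substantive steps are executed, and in one of them your argument has a real gap.

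For graphicality (Items~\eqref{NLitem1}--\eqref{NLitem2}) the paper does not argue via ``slab pins the normal, small total curvature propagates it.'' It first converts $\int_{\Sigma^\pm}|A|^2<\eps$ into the pointwise bound $|A|^2(q)\le C_1\eps\,\dist(q,p)^{-2}$ via the Choi--Schoen estimate \cite{ChoiSchoen} (together with boundary regularity from the $C^{2,\alpha}$ curves), combines this with $\int_{\partial B_{100s}(p)\cap\Sigma}|A|\le\tfrac1{10}$ from Proposition~\ref{CatTotCurvProp}\eqref{CTCitem2} to get $\int_{\partial\Sigma^\pm}|A|\le\tfrac{2}{10}$, and then invokes Proposition~1.3 of \cite{CMAnnuliSlits}, a result that says a minimal annulus with small $\int_{\partial}|A|$ is a graph. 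This sidesteps the obstacle you flag at the end: one never has to track the conormal angle along $\sigma^\pm_{\pm\theta}$ directly. Your heuristic ``total curvature bounds the total variation of the Gauss map'' is not literally correct---small $\int|A|^2$ bounds the \emph{area} of the Gauss image, not its diameter---so without Choi--Schoen (or an equivalent) you do not get pointwise control of the normal, and your proposed fix via a Gauss--Bonnet/conormal comparison is substantially more work than the cited route and is not carried out.

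For Item~\eqref{NLitem3} the paper uses a volume argument rather than axial extent: one component $U$ of $B_{100s}(p)\backslash\Sigma$ lies in the slab $\{|x_3|\le C_2\theta\}$, hence $\mathrm{Vol}(U)\le 100^2\pi\, s^2\, C_2\theta$; combined with the lower bound $\mathrm{Vol}(U)\ge s^3$ from Proposition~\ref{CatTotCurvProp}\eqref{CTCitem2} this gives $s\le C_0\theta$ immediately. Your argument needs the catenoid axis to be nearly vertical, which you justify using Items~\eqref{NLitem1}--\eqref{NLitem2}; the paper's argument is independent of those.

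Finally, the digression showing that a $C$ blow-up pair forces $\theta$ small is unnecessary (and somewhat circular, since it presupposes the end structure you are about to prove): the paper's proof works for all $0<\theta<\tfrac{\pi}{4}$ without any smallness assumption on $\theta$.
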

\begin{proof}
 Let $\epsilon>0$ be a small--as yet unspecified--constant and use it to choose $C>100$ as in Proposition \ref{CatTotCurvProp}.  Denote by $\Sigma^\pm$ the components of $\Sigma\backslash B_{ Cs}(p)$ with $\sigma^\pm_{\pm \theta}\subset\partial \Sigma^\pm $.   Item \eqref{Factsitem1} of Proposition \ref{FactsProp} and Proposition \ref{CatTotCurvProp} imply that $\int_{\Sigma^\pm} |A|^2 <\epsilon$.  
It follows from  the estimate of Choi and Schoen \cite{ChoiSchoen} and  boundary regularity estimates that, provided $\epsilon$ is small enough, $\Sigma^\pm$ satisfies the point-wise curvature estimate $|A|^2(q)\leq C_1 \epsilon \dist(q, p)^{-2}$.  Combining this with  Item \eqref{CTCitem2} of Proposition \ref{CatTotCurvProp}, one has $\int_{\partial \Sigma^\pm} |A|\leq \frac{2}{10}$. In particular, by Proposition 1.3 of \cite{CMAnnuliSlits}, both $\Sigma^\pm$ are graphical. By shrinking $\epsilon$  and increasing $C$ one can ensure Item \eqref{NLitem1} holds.

Item \eqref{NLitem3} follows by noting that one component of $U$ of $B_{100s}(p)\backslash \Sigma$  satisfiess $U\subset\set{|x_3|\leq C_2 \theta}$ where $C_2=C_2(\Omega^-,\Omega^+)$. In particular, $Vol(U)\leq 100^2 \pi s^2 C_2 \theta$
Hence, the estimate follows from Item \eqref{CTCitem2} of Proposition \ref{CatTotCurvProp} by taking $C_0=100^2 \pi C_2$.
\end{proof}

\begin{proof} (of Proposition \ref{FlatProp}).
We first note that there is an $R>0$ so that (possibly after a small rotation of $\Real^3$) each component $\Sigma^1, \ldots, \Sigma^k$ of $\Sigma\backslash \Pi^{-1}(D_R)$ is a (multi-valued) graph over $V'_i\backslash D_R$ where $V'_i\subset P\backslash \set{0}$ is an open cone. 
When $t<\infty$ this follows directly from Item \eqref{FPitem3}, the estimate of Choi and Schoen \cite{ChoiSchoen} and Item \eqref{FPitem6}. When $t=\infty$ one can't use \eqref{FPitem6} to conclude the uniqueness of tangent cones at $\infty$ of $\Sigma$ and instead must use Proposition 1.3 of \cite{CMAnnuliSlits}.
The fact that $\Sigma$ has boundary does not cause issues as $\partial \Sigma$ consists of a pair of lines so one may Schwarz reflect and obtain the needed point-wise estimates up to $\partial \Sigma$.  By Item \eqref{FPitem6}, the rotation is unnecessary if $t<\infty$ or if one of the $\Sigma_i$ has a plane as its tangent cone at infinity.

We claim that  $1\leq k\leq 2$ and that we may label the components $\Sigma^i$ so that $\Sigma^-$ is a single-valued graph over $H^-\backslash D_R$ with $\partial \Sigma^-\subset \partial H^-$. Further, if $t<\infty$ then $k=2$ and $\Sigma^+$ is a single-valued graph over $H^+\backslash D_R$ with $\partial \Sigma^+\subset \partial H^+_t$ while if $t=\infty$ and $k=2$ then $\Sigma^+$ is a single-valued graph over $P\backslash D_R$ with $\partial \Sigma^+\subset \Pi^{-1}(\partial D_R)$.  Finally, the small rotation is only needed if $k=1$; indeed if $k=2$ then the tangent cone at infinity to $\Sigma$ is contained in $P$.

If $t<\infty$ then, Item \eqref{FPitem6} and the fact that $V$ is a non-empty open cone implies $P\backslash \overline{H^-\cup H^+}$ is a non-empty cone in $P$. Moreover, as no initial rotation was needed, $\Sigma\subset  T_{Ct}(H^-\cup H^+)$.   In particular, each $V_i'\subset T_{Ct}(H^-\cup H^+)$ and so no 
$\Sigma^i$ is multi-valued.  By Schwarz reflecting over the lines making up part of $\partial \Sigma^i$ we see that each $\Sigma^i$ is 
a subset of either a single valued graph over $P\backslash D_R$ or is part of the middle sheet of a $3$-valued graph over $P\backslash 
D_R$.  In either case, each $\Sigma^i$ has a unique tangent plane at infinity--necessarily parallel to $P$.   As a 
consequence, the area upper bound on $\Sigma$ given by Item \eqref{FPitem3} and Item \eqref{FPitem5}  imply that $k=2$ and that $V\backslash 
D_R\subset T_{Ct}(\Pi(\Sigma^1)\cap \Pi(\Sigma^2))$. The two components $\Sigma^1$ and $\Sigma^2$ are both single valued graphs so our 
 claim will be verified provided $\partial H^-$ cannot be connected to $\partial H^+_t$ in either $\Sigma^1$ or $\Sigma^2$.  To that 
end let $\sigma^i_r=\Pi^{-1}(\partial D_r) \cap \Sigma^i$ for $r\geq R$.  For $r$ large enough,  
$\Pi(\sigma^i_r)\cap V \neq \emptyset$ and $\Pi(\sigma^i_r)\subset T_{Ct} (H^-\cup H^+)$. Further, if  $ \sigma^1_r$ connects $H^-$ to $H^+_t$ then $\sigma_r^+$ also connects $H^-$ to $H^+_t$. Hence, there is a point $p\in V$ so that  $\Pi(\sigma_r^1\cap \sigma^2_r)=p$ which contradicts Item \eqref{FPitem2}. We label the components so $\partial \Sigma^-\subset \partial H^-$ and $\partial \Sigma^+\subset \partial H^+_t$.

When $t=\infty$, none of the $\Sigma^i$ is multi-valued.  Indeed, as $\partial \Sigma\subset P$ and $\Sigma$ is embedded, any component $\Sigma^i$ with $\Pi(\Sigma^i)=P\backslash D_R$ would either be single-valued or would spiral infinitely--the latter situation is ruled out by Item \eqref{FPitem2}. Hence we may label the $\Sigma^i$ so that $\partial\Sigma^1\subset \partial H^-$ while  $\partial \Sigma^i\subset \Pi^{-1}(\partial D_R)$ for $i\geq 2$.  
If $\Sigma^1$ is the only component then plane barriers imply that, after a rotation,  $\Sigma=H_-$. As already noted, when $k>1$ the  initial small rotation is unnecessary.
 In particular, by Item \eqref{FPitem6} $P$ is the tangent cone at infinity of each of the $\Sigma^i$ for $i=2,\ldots, k$. A consequence of this, Item \eqref{FPitem6} and Item \eqref{FPitem4} is that $H^-$ is the tangent cone at infinity of $\Sigma^1$ when $k>1$.  Hence, by Item \eqref{FPitem2} $k=2$ and there are two components which we label $\Sigma^-$ and $\Sigma^+$.

We may assume that $\Sigma^-$ and $\Sigma^+$ are connected in $\Sigma$ as otherwise using planes as barriers implies that $\Sigma^-=H^-$ and $\Sigma^+=H^+_t$ or $\Sigma^+=P+T\eE_3$ and we would be done.
As $\partial \Sigma^-$ is a subset of a line,  Schwarz reflection gives a single valued graph $\Gamma^-=\Gamma_{u^-}$ over $P\backslash D_R$.  When $t<\infty$ one obtains in the same manner a single valued graph $\Gamma^+=\Gamma_{u^+}$ over $P\backslash D_R$. that contains $\Sigma^+$.  When $t=\infty$, $\Sigma^+$ is already such a graph and we write $\Sigma^+=\Gamma_{v^+}$. As $\Sigma^-\subset \set{|x_3|<T}$ for some $T<t$ we have $\Gamma^-\subset \set{|x_3|<2T}$, and analogously, $\Gamma^+\subset \set{|x_3|\leq 2t}$.  As each $\Gamma^\pm$ contains line segments over $H^\pm \backslash D_R$ and the $u^\pm$ are asymptotically harmonic (see \cite{SCH}) the functions $u_\pm$ have the following asymptotic expansion,
\begin{equation}\label{AsympEqn}
 u^\pm(\xX)=\delta^\pm \pm \lambda^\pm \frac{\xX \cdot \vV^\pm}{|\xX|^2}+Q^\pm(\xX)+O(|\xX|^{-n-1})
\end{equation}
where here $\delta^-=0$ while $\delta^+=t$, $\vV^\pm$ is the outward normal to $H^\pm$ in $P$, $\lambda^\pm\geq 0$ and $Q^\pm$ is a homogenous harmonic function of order $n\leq -2$.  
Similarly, as $\Sigma^+$ is disjoint from $P$,  $v^+$ has the expansion
\begin{equation*}
v^+(\xX)= \mu_+ \log |\xX|+O(1)
\end{equation*}
where $\mu_+\geq 0$.
As $\Sigma$ is connected $\lambda^->0$.  Indeed, if $\lambda^-=0$ then by Item \eqref{FPitem6} $Q^-=0$ and so $H^-$ is one component of $\Sigma$.    

We finish the proof by considering force balancing.  For each $r>R$ let $\sigma^-_r$ be the component of $\Pi^{-1}(\partial D_r) \cap \Sigma^-$ so that $\partial \sigma_r^- \subset \partial H^-$.  Denote by $L_r^-$  the bounded component of $\partial H^- \backslash \partial \sigma_r^-$ and by $\hat{\sigma}_r^{-}=\sigma^-_r\cup L^-_r$ so $[\hat{\sigma}^-_r]$ generates $H_1(\Sigma^-)$.  
If $\nu^-$ is the conormal to $\hat{\sigma}^-_r$ then we compute using \eqref{AsympEqn}
that
\begin{equation*}
 \int_{\sigma^-_r} \nu^- = -2 r \vV^- +O(r^{-1}).
\end{equation*}
On the other hand,
\begin{equation*}
 \int_{L^-_r} \nu_- =2r \vV^- +\alpha^- \vV^- -\beta^- \eE_3 +O(r^{-1})
\end{equation*}
where $\alpha_-, \beta_- >0$.  This second computation uses \eqref{AsympEqn} and the fact that the conormal of $\Sigma$ along $\partial H^-$ is normal to $\partial H^-$ and, by Item \eqref{FPitem6}, must point out of $\set{x_3>0}$.
The force of $\Sigma^-$ satisfies,
\begin{equation*}
\FF^-= \int_{\hat{\sigma}_r^-}\nu_-=\alpha^- \vV^- - \beta^-\eE_3 +O(r^{-1})
\end{equation*}
If $t=\infty$ the force of $\Sigma^+$ is $\mu_+\eE_3$ which is orthogonal to $\vV_-$.  This is impossible as $\alpha^->0$ and $\Sigma$ is connected.
If $t<\infty$ the force $\FF^+$ of $\Sigma^+$ may be computed it the same manner as for $\FF^-$ and balancing implies:
\begin{equation*}
0=\int_{\hat{\sigma}_r^+}\nu^+ + \int_{\hat{\sigma}_r^-}\nu^-=\alpha_-\vV_- +\alpha_+\vV_+ +(\beta_+-\beta_-)\eE_3+O(r^{-1}).
\end{equation*}
As $\vV_\pm \cdot \eE_3=0$ and $\alpha^\pm>0$ this can occur only when $\vV^+=- \vV^-$ which is inconsistent with $H^-\cap H^+$ containing a non-empty cone.
\end{proof}

\section{Harmonic Rescaling}
Following Traizet we consider the harmonic rescalings of minimal graphs.
We begin with some facts about Green's functions. Recall that for a given (possibly unbounded) domain $\Omega$ in $\Real^2$ with $\partial \Omega\neq \emptyset$ of class $C^{2,\alpha}$ we may define the Green's function of $\Omega$ with pole at $p\in \Omega$ to be the unique function $G(x;p)$ so that
\begin{enumerate}
 \item $G(\cdot; p)\in C^\infty(\Omega\backslash\set{p})\cap C^0(\overline{\Omega}\backslash \set{p})$;
 \item $G(x;p)=-\frac{1}{2\pi} \log| x-p| +R(x; p)$ where $R\in C^0(\Omega\times \Omega)$;
 \item $\Delta G(\cdot; p)=0$ in $\Omega\backslash\set{p}$;
 \item $G(\cdot;p)|_{\partial \Omega}=0$ and if $\Omega$ is unbounded $\lim_{x\to \infty} G(\cdot; p)=0$.
\end{enumerate}
The uniqueness of $G$ follows from the maximum principle which also ensures that $G(\cdot; p)>0$ on $\Omega\backslash \set{p}$.
The function $R(x;p)$ is the regular part of $G$ and can be checked to be a smooth harmonic function in both $x$ and $p$.  We set $R(x)=\frac{1}{2}R(x;x)$ a function in $C^\infty(\Omega)$ also known as the Robins function of $\Omega$. 
For later reference we give the Robin's function when $\Omega=\set{x: (x-x_0)\cdot \vV>0), |\vV|=1}$ is a half-space
\begin{equation}
 R(x)=\frac{1}{2\pi} \log \left(2 \vV\cdot (x-x_0)\right).
\end{equation}

We then have a general approximation result for very flat minimal graphs:
\begin{thm}\label{HarmRescale}
Fix $\Omega_i\subset P$ a sequence of bounded convex domains with $\partial \Omega_i=\sigma_i$ each of class $C^{2,\alpha}$.  Suppose in addition there is a, possibly unbounded, convex domain $\Omega$ with non-empty boundary so that $\Omega_i\to \Omega\neq \emptyset$ in the following sense:
\begin{enumerate}
 \item $\overline{\Omega}_i \to \overline{\Omega}$ in the Hausdorff sense
 \item $\sigma_i\to \sigma=\partial \Omega$ in $C^{2,\alpha}_{loc}(P)$ and with multiplicity one.
\end{enumerate}

If there is a sequence of points $p_i\in \Omega_i$, radii $r_i>0$ and functions $u_i\geq 0$ so that:
\begin{enumerate}
 \item $D_{r_i}(p_i)\subset \Omega_i$;
 \item $p_i\to p\in \Omega$ and $r_i\to 0$;
 \item $u_i\in C^\infty(\Omega_i\backslash \overline{D}_{r_i}(p_i))\cap C^2(\overline{\Omega}_i\backslash D_{r_i}(p_i))$ with $u_i>0$ on $\Omega_i$ and
\begin{equation*}
 \left. u_i \right|_{\partial \Omega_i}=0
\end{equation*}
 \item $u_i \to 0$ in $C^\infty_{loc} (\Omega\backslash \set{p})$.
 \item $\Sigma_i=\Gamma_{u_i}$ is a minimal annulus.
\end{enumerate}
then there is a sequence of $\lambda_i>0$ with $\lambda_i\to 0$ and functions $v_i$ so that
\begin{enumerate}
\item $v_i\in C^\infty(\Omega_i\backslash \overline{D}_{r_i}(p_i))\cap C^2(\overline{\Omega}_i\backslash D_{r_i}(p_i))$ 
\item  $v_i \to 0$ in $C^\infty_{loc} (\overline{\Omega}\backslash \set{p})$.
\item On $\overline{\Omega}\backslash D_{r_i}(p)$ one has
\begin{equation*}
u_i(x)=\lambda_i G(x; p)+\lambda_i v_i(x) 
\end{equation*}
where here $G$ is Green's function of $\Omega$ with pole at $p$.
\item The flux $\FF_i$ of $\Sigma_i$ has the asymptotic form
\begin{equation*}
 \FF_i=-\lambda_i \eE_3-\lambda_i^2\left( \partial_1 R(p)\eE_1+\partial_2 R(p) \eE_2\right)+o(\lambda_i^2)
\end{equation*}
where here $R$ is the Robin's function of $\Omega$.
\end{enumerate}
\end{thm}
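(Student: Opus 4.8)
The plan is to adapt Traizet's argument (Proposition~3 of \cite{Traizet}), whose engine is the elementary fact that a very flat minimal graph is nearly harmonic. I would write the minimal surface equation for $u_i$ in divergence form $\Delta u_i=\Div E_i$ with $E_i=\bigl(1-(1+\abs{\nabla u_i}^2)^{-1/2}\bigr)\nabla u_i$, so that $\abs{E_i}\le\frac{1}{2}\abs{\nabla u_i}^3$ and $Tu_i:=\nabla u_i(1+\abs{\nabla u_i}^2)^{-1/2}$ is divergence free on $\Omega_i\setminus\overline{D}_{r_i}(p_i)$; hence $\int_\gamma Tu_i\cdot n\,ds$ is a homological invariant and is, up to sign, $\FF_i\cdot\eE_3$. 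Computing the conormal of $\Gamma_{u_i}$ along $\sigma_i=\partial\Omega_i$, where $u_i$ vanishes and so $\nabla u_i$ is inward normal, expresses $\FF_i\cdot\eE_3$ as $\mp\int_{\partial\Omega_i}\abs{\nabla u_i}(1+\abs{\nabla u_i}^2)^{-1/2}\,ds$; since $u_i>0$ in $\Omega_i$ the Hopf lemma gives $\partial_n u_i<0$ on $\partial\Omega_i$, so this integral is nonzero of a definite sign, and fixing the orientation accordingly I would \emph{define} $\lambda_i:=-\FF_i\cdot\eE_3>0$. The $\eE_3$--component of the asserted expansion then holds by construction, and the maximum principle for the minimal surface equation, which gives $u_i\le\max_{\partial D_\delta(p)}u_i$ on $\overline{\Omega}_i\setminus D_\delta(p)$ for each small fixed $\delta$, shows that $u_i\to0$ uniformly on compact subsets of $\overline{\Omega}\setminus\set{p}$.

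Next I would prove $u_i/\lambda_i\to G(\cdot;p)$ in $C^\infty_{loc}(\overline{\Omega}\setminus\set{p})$. By interior and boundary Schauder estimates for the (quasilinear) minimal surface equation the convergence $u_i\to0$ holds in $C^{2,\alpha}_{loc}(\overline{\Omega}\setminus\set{p})$; in particular $\abs{\nabla u_i}\to0$ there and the equation may be read as the uniformly elliptic linear equation $a_i^{jk}\partial_{jk}u_i=0$ with $a_i^{jk}=\delta_{jk}-(1+\abs{\nabla u_i}^2)^{-1}\partial_ju_i\partial_ku_i\to\delta_{jk}$ in $C^\alpha_{loc}$. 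Fixing a compact $K\subset\Omega\setminus\set{p}$ with non-empty interior and setting $\mu_i=\max_Ku_i\to0$, $w_i=u_i/\mu_i$, the Harnack inequality and Schauder estimates make the $w_i$ precompact in $C^{2,\alpha}_{loc}(\overline{\Omega}\setminus\set{p})$; any subsequential limit $w_\infty\ge0$ is bounded (again by the maximum principle), harmonic on $\Omega\setminus\set{p}$, vanishes on $\partial\Omega$, and has $\max_Kw_\infty=1$, so it cannot extend harmonically across $p$, and matching its logarithmic singularity with a multiple of $G$ and applying Phragm\'en--Lindel\"of (or reflection when $\Omega$ is a half-plane) gives $w_\infty=c\,G(\cdot;p)$ with $c=1/\max_KG$, the same for every subsequence; hence $w_i\to cG$ for the full sequence. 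Feeding $u_i=\mu_iw_i$ into the boundary formula for $\FF_i\cdot\eE_3$ and using $\int_{\partial\Omega}\partial_nG\,ds=-1$ gives $\lambda_i=c\mu_i(1+o(1))$, so $\lambda_i\to0$ and $u_i/\lambda_i=w_i(\mu_i/\lambda_i)\to G$. Setting $v_i:=u_i/\lambda_i-G$ then yields $v_i$ in the stated regularity class, $v_i\to0$ in $C^\infty_{loc}(\overline{\Omega}\setminus\set{p})$, and $u_i=\lambda_iG+\lambda_iv_i$ on $\overline{\Omega}\setminus D_{r_i}(p)$.

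Finally I would pin down the $\lambda_i^2$--term of $\FF_i$. For a minimal graph $\Gamma_u$, parametrizing a generating loop as the lift of $\partial D_r(p)$ (with $c'$ the unit tangent and $\perp$ rotation by $\pi/2$ in $P$) and computing the unit conormal gives the $r$--independent formula
\begin{equation*}
 \FF\bigl(\partial D_r(p)\bigr)=\int_{\partial D_r(p)}\frac{1}{\sqrt{1+\abs{\nabla u}^2}}\Bigl(\,(c')^{\perp}+(\nabla u\cdot c')\,(\nabla u)^{\perp}\ ,\ \nabla u\cdot(c')^{\perp}\,\Bigr)\,ds .
\end{equation*}
Applying this with $u=u_i=\lambda_i(G+v_i)$: on a fixed small $\partial D_r(p)$ one has $\abs{\nabla u_i}=O(\lambda_i)$, so the $\eE_3$--component returns $-\lambda_i$ up to $o(\lambda_i^2)$ (consistent with the definition of $\lambda_i$), while in the horizontal component the order--$1$ term $\int_{\partial D_r(p)}(c')^{\perp}\,ds$ vanishes and what survives at order $\lambda_i^2$ is
\begin{equation*}
 \lambda_i^2\int_{\partial D_r(p)}\Bigl((\nabla G\cdot c')\,(\nabla G)^{\perp}-\frac{1}{2}\abs{\nabla G}^2\,(c')^{\perp}\Bigr)\,ds+o(\lambda_i^2).
\end{equation*}
Writing $G=-\frac{1}{2\pi}\log\abs{x-p}+R(x;p)$, the purely logarithmic part of the integrand integrates to zero, the purely regular part is $O(r)$, and the cross terms between $-\frac{1}{2\pi}\log\abs{x-p}$ and $R(x;p)$, using $\nabla_xR(x;p)|_{x=p}=\nabla R(p)$ and $\int_0^{2\pi}\hat r\otimes\hat r\,d\theta=\int_0^{2\pi}\hat\theta\otimes\hat\theta\,d\theta=\pi I$, combine to $-\lambda_i^2\bigl(\partial_1R(p)\eE_1+\partial_2R(p)\eE_2\bigr)+O(\lambda_i^2r)$; since the left side is independent of $r$, letting $r\to0$ gives the claimed expansion of $\FF_i$.

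The step I expect to require the most care is this last second-order force computation: one must keep both the $(1+\abs{\nabla u}^2)^{-1/2}$ correction and the quadratic term in the conormal formula and verify that, after substituting the Green's function, exactly the gradient of the Robin function survives at order $\lambda_i^2$ --- this is the data used afterwards to balance forces and locate the neck. A secondary technical point is making the compactness argument rigorous up to and ``at'' the boundary: boundary gradient estimates for the $\Gamma_{u_i}$ with zero data on the convex curves $\sigma_i$, and, when $\Omega$ is unbounded, the Phragm\'en--Lindel\"of uniqueness identifying the limit with $G(\cdot;p)$ rather than another positive harmonic function.
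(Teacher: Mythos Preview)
Your proposal is correct and follows essentially the same route as the paper: Harnack and Schauder estimates give compactness of the normalized graphs $u_i/\mu_i$, the limit is identified as a multiple of the Green's function, and the force is computed by expanding the conormal integral around a small circle and reading off the gradient of the Robin function at order $\lambda_i^2$. The only noteworthy tactical differences are that you define $\lambda_i$ directly as $-\FF_i\cdot\eE_3$ (the paper instead sets $\lambda_i$ via the value of $u_i$ at a fixed reference point and recovers the $\eE_3$--component of the force afterwards), and that for unbounded $\Omega$ you invoke reflection/Phragm\'en--Lindel\"of on the harmonic limit whereas the paper bounds the $u_i$ themselves from above by explicit barriers cut from Riemann's minimal surfaces; both devices accomplish the same uniqueness step.
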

\begin{proof}
We first prove the existence of the values $\lambda_i$ and the functions $v_i$ using the Harnack inequality.
To that end, fix a point $q\in \Omega\backslash \set{p}$. By throwing out a finite number of elements in the sequence we may assume that $q\in \Omega_i\backslash \overline{D}_{r_i}(p_i)$ for all $i$. Set $\mu_i=u_i(q)>0$ and $K_j=\Omega\backslash \left(T_{\delta_j}(\partial \Omega)\cup B_{\delta_j}(p)\right)\cap \overline{D}_{R_j}$.
Here we choose $\delta_j\to 0$ and $R_j\to \infty$ so that each $K_j$ is a compact annulus containing $q$, $K_{j}\subset \mathring{K}_{j+1}$ and $\Omega\backslash \set{p}=\cup_{j=1}^\infty K_j$. 

Fix $\epsilon>0$, as  $\overline{\Omega}_i\to \overline{\Omega}$ in the Hausdorff sense and $\Omega_i$ is convex, for each $K_j$ there is an $i_j$ so that for $i>i_j$ one has $K_j\subset \Omega_i$ and  $\dist(K_j, \partial \Omega_i)> \frac{1}{2}\delta_j$.  Furthermore, as $u_i\to 0$ in $C^\infty_{loc}(\Omega\backslash\set{p})$ 
\begin{equation}
 \sup_{K_j} |u_i|+|\nabla u_i|<\epsilon.
\end{equation}
In particular, when $i>i_j$, $u_i$ solves a uniformly elliptic equation on $K_j$.  Hence, the Harnack inequality (Chapter 8 of \cite{GiTr}) gives a constant $C=C_j>0$ with $C_{j+1}\geq C_j$ so that for $i>i_j$:
\begin{equation}
 \sup_{K_j} |u_i|\leq C_j \mu_i
\end{equation}
By applying the maximum principle to the component of $\Omega_i \backslash K_j$ with boundary $\partial \Omega_i \cup \partial K_j$ and noting that $u=0$ on $\partial \Omega_i$ one obtains, for $i>i_j$, the estimate:
\begin{equation}\label{SecondHarnackEst}
 \sup_{\overline{\Omega}_i\backslash D_{\delta_j}(p)} u_i \leq C_j \mu_i.
\end{equation}
Plane barriers, \eqref{SecondHarnackEst} and the boundary maximum principle imply that for $i>i_1$
\begin{equation}
 \sup_{\partial \Omega_i} |\nabla u_i|\leq C_1  \mu_i .
\end{equation}
Hence, interior gradient estimates and \eqref{SecondHarnackEst} give a constant $C>0$ so for $i>i_j$:
\begin{equation}
 \sup_{\overline{\Omega}_i\backslash D_{2\delta_j}(p)} |\nabla u_i|\leq C C_j \delta_j^{-1} \mu_i.
\end{equation}
In particular, for $i>i_j$ sufficiently large, the $u_i$ satisfy a uniformly elliptic equation on $\overline{\Omega}\backslash D_{2\delta_j}$. Thus, Schauder estimates give a constant $C$ so that 
\begin{equation}
 \sup_{\overline{\Omega}_i\backslash D_{2\delta_j}(p)} |u_i|+\delta_j |\nabla u_i|+\delta_j^2 |\nabla^2 u_i| \leq C C_j \mu_i.
\end{equation}
On the other hand, interior estimates give constants $C(k,j)$ so that on $K_j$ 
\begin{equation*}
 ||u_i||_{C^k} \leq C(k,j) \mu_i.
\end{equation*}
Hence, if we set $\tilde{u}_i=\mu_i^{-1} u_i$ then by the Arzela-Ascoli theorem one has (up to passing to a subsequence) that
$\tilde{u}_i$ converges in $C^\infty_{loc}( \Omega\backslash \set{p}) \cap C^1(\overline \Omega \backslash D_{\delta_1}(p))$ to a function $\tilde{u}$ which vanishes on $\partial \Omega$ and has $\tilde{u}(q)=1$.  As  $|\nabla u_i|\to 0$ in $C^\infty_{loc}( \Omega\backslash \set{p}) \cap C^0(\overline \Omega \backslash D_{\delta_1}(p))$ one has $\tilde{u}$ harmonic on $\Omega\backslash \set{p}$. It follows from the Harnack inequality and the nature of the convergence that $\tilde{u}>0$ in $\Omega\backslash \set{p}$. Moreover, 
\begin{equation*}
 \sup_{D_{\Omega\backslash D_{2\delta_1}(p)}} |\tilde{u}|\leq C C_1.
\end{equation*}

In addition, if $\Omega$ is unbounded  by using barriers arising from Riemann's minimal surfaces we have that $\lim_{x\to 0} \tilde{u}(x)= 0$.  Indeed, let $\hat{\mu}_i=\sup_{\partial D_{\epsilon_1}(p)} |u_i|$ so that the Harnack inequality gives $\hat{\mu}_i\leq \hat{C} \mu_i$ for some uniform constant $\hat{C}\geq 1$.
Fix a $y\in \partial \Omega$ and let $H_y$ be the half-space so that $\Omega\subset H_y$ and $y\in \partial H_y$.  Such a half-space exists as $\Omega$ is convex.  By considering an appropriate piece of one of Riemann's examples it is possible to find a sequence of minimal graphs $w_i$ over $H_y\backslash D_{\delta_1}(p)$ so that $w_i(x)=\hat{C}' \hat{\mu}_i G_y(x;p) +\mu_i\hat{w}_i(x)$ where here $G_y$ is the Green's function of $H_y$, $\hat{w}_i=0$ on $\partial H_y$ and $\lim_{x\to \infty} \hat{w}(x)=0$.  Moreover, $\hat{w}_i\to 0$ uniformly on compact subsets of $\overline{H}_y\backslash D_{\delta_1}(p)$ and $\hat{C}'$ satisfies $\hat{C}' \inf_{\partial D_{\delta_1} (p)} G_y (x;p)> \hat{C}$.  For $i$ large, $w_i\geq u_i$ on $\partial D_{\delta_1}(p)$, and so  $w_i\geq u_i$ by the maximum principle. Hence, $\hat{C}'G_y(x;p)\geq  \tilde{u}(x)$ for all $x$.
It follows that $\tilde{u}=\lambda G$ for some $\lambda>0$ where $G$ is the Green's function of $\Omega$ with pole at $p$.  Hence, we set $\lambda_i=\mu_i \lambda$ and $v_i=\frac{u_i}{\lambda_i}- G$.

We must also verify the asymptotic expansion for the force vector.  To that end we fix a $r>0$ and take $i$ large enough so that $D_{r_i}(p_i)\subset D_r(p)\subset \Omega_i$. We let $\gamma_i^r$ be the image of $\partial D_{r}(p)$ in $\Sigma_i$. Clearly, $[\gamma_i^r]$  generates $H_1(\Sigma_i)$. We normalize $\nu_i:\gamma_i^r\to \Real^3$ the conormal to $\gamma_i^r$ in $\Sigma_i$ so the vector field $\Pi_* \nu_i$ points out of $D_r(p)$.
If we introduce polar coordinates $(\rho,\theta)$ centered at $p$ and write $u$ for $u_i$ then
\begin{align*}
\nu_i ds_i&=r\frac{\left( (1+\frac{u_\theta^2}{r^2}) \cos \theta +u_\rho \frac{u_\theta}{r} \sin \theta, (1+\frac{u_\theta^2}{r^2})\sin \theta -u_\rho \frac{u_\theta}{r}\cos \theta, u_\rho\right)}{\sqrt{1+|\nabla u|^2}}\\
&=r(\cos \theta, \sin \theta, u_\rho)\\
&+r((\frac{u_\theta^2}{r^2}-\frac{1}{2} |\nabla u|^2) \cos \theta+u_\rho \frac{u_\theta}{r} \sin \theta, ((\frac{u_\theta^2}{r^2}-\frac{1}{2} |\nabla u|^2) \sin \theta-u_\rho \frac{u_\theta}{r} \cos \theta,0) +O(|\nabla u|^3)
\end{align*}
where $ds_i$ is the length element.
As $u_i=\lambda_i G+\lambda_i v_i$ and $v_i=o(1)$ on $\partial D_r(p)$,
\begin{align*}
 \FF_i&=-\lambda_i \eE_3+ \lambda_i^2 r \int_{0}^{2\pi}\left(\left( \frac{G_\theta^2}{r^2}-\frac{1}{2} |\nabla G|^2) \cos \theta+G_\rho \frac{G_\theta}{r} \sin \theta\right)\eE_1\right.\\
&+\left. \left((\frac{G_\theta^2}{r^2}-\frac{1}{2} |\nabla G|^2) \sin \theta-G_\rho \frac{G_\theta}{r} \cos \theta \right)\eE_2\right) d\theta+o(\lambda_i^2)
\end{align*}
To proceed further we write out an expansion for $G$ about $p$:
\begin{equation*}
 G(r,\theta)=-\frac{1}{2\pi} \log r+ a_0 +a_1 r\cos \theta +b_1 r \sin \theta+O(r^2).
\end{equation*}
One computes,
\begin{align*}
 \frac{G_\theta^2}{r^2}&=\frac{1}{2} a_1^2 (1-\cos 2\theta) +\frac{1}{2} b_1^2 (1+\cos 2\theta )-2 a_1 b_1 \sin 2 \theta+O(r)
\end{align*}
\begin{align*}
 |\nabla G|^2 =\frac{1}{4\pi^2 r^2} +\frac{1}{\pi r} a_1\cos \theta +\frac{1}{\pi r} b_1 \sin \theta+ O(1)
\end{align*}
and
\begin{align*}
 \frac{G_\theta}{r} G_\rho=- \frac{1}{2\pi r} a_1 \sin \theta + \frac{1}{\pi r} b_1 \cos \theta + O(1).
\end{align*}
Plugging this into the formula above we obtain:
\begin{align*}
 \FF_i&=-\lambda_i \eE_3+ \lambda_i^2 \left( -a_1 \eE_1 -b_1 \eE_2 \right) +O(r \lambda_i^2) +o(\lambda_i^2)\\
      &=-\lambda_i \eE_3-\lambda_i^2 \left( a_1 \eE_1 +b_1 \eE_2 \right)  +o(\lambda_i^2)
\end{align*}
where the second asymptotic equality follows as we may take $r=r_i\to 0$ as $i\to \infty$.
The proof is concluded by noting that $a_1 =\partial_1 R(p)$ and $b_1=\partial_2 R(p)$.

\end{proof}

\section{Concluding the Proof}

We are now in a position to prove Theorem \ref{MainThm}.
\begin{proof}
Pick $C$ as in Proposition \ref{NeckLocationProp} and a sequence $C_i>C$ with $C_i\to \infty$. Using this $C_i$ and $\Omega^\pm$ pick $\hat{\theta}_i$ as in Proposition \ref{BlowUpProp}.  Up to passing to a subsequence, we have that $\theta_i<\hat{\theta}_i$ and so are able to find $(p_i,s_i)$ each a $C_i$ blow-up pairs in $\Sigma_i$.  We also take $p_i\to p\in \overline{\Omega^-\cap \Omega^+}$ and $s_i\to 0$.

By Proposition \ref{FactsProp} $\Sigma_i$ converges to $\overline{\Omega^-\cap \Omega^+}$ in the Hausdorff sense. Furthermore, by Item \eqref{Factsitem1} of Proposition \ref{FactsProp} and Proposition \ref{CatTotCurvProp} one has $\nu_{\theta_i}\to 8\pi \delta_p$ in the weak* sense. Finally,  Proposition \ref{NeckLocationProp} gives $u^\pm_i$ defined on $\hat{\Omega}^\pm_{\theta_i}$ so that $\Sigma_i \backslash B_{Cs_i}(p_i)=\Sigma^-_i\cup \Sigma^+_i$ where $\Sigma^\pm_i=R_{\pm \theta_i}(\Gamma_{\mp u_\pm})$.
Notice that on $\hat{\Omega}^\pm_{\theta_i}$ one has $|\nabla u^\pm_i|\leq \frac{1}{10}$ while $|u^\pm_i|\leq C_0 \theta_i$ for $C_0=C_0(\Omega^-,\Omega^+)$. In particular, $u^\pm_i$ satisfies a uniformly elliptic equation on $\hat{\Omega}^\pm_{\pm\theta_i}$ and tends to zero as $i\to \infty$ point-wise. Hence, for any $\epsilon>0$, $\Sigma^\pm_i\backslash B_\epsilon(p)$ converges in $ C^2(\Real^3\backslash B_{\epsilon}(p))$ to $\overline{\Omega^-\cap \Omega^+}\backslash B_\epsilon(p)$.  

Thus, Items \eqref{MTitem1}, \eqref{MTitem2} and \eqref{MTitem3} will be verified provided we can show that $p$ satisfies $\dist(p, \ell_2)=\dist({\Omega^-\cap\Omega^+}, \ell_2)$. To that end we first show that $p\not \in \Omega^-\cap \Omega^+$.  Indeed, if $p\in \Omega^-\cap \Omega^+$  then  Theorem \ref{HarmRescale} applied to $u^\pm_i$ and a rotation by $\pm \theta_i$ gives:
\begin{align} \label{RotForceEqn}
 \FF_i^\pm &=\begin{bmatrix} \cos \theta_i & 0 & \pm \sin \theta_i \\ 0 & 1 & 0 \\ \mp\sin \theta_i & 0 & \cos \theta_i \end{bmatrix}
\left(\lambda_i^\pm  \begin{bmatrix} 0 \\ 0 \\ 2\pi \end{bmatrix}\mp (\lambda_i^\pm)^2 
\begin{bmatrix}   
\partial_1 R^\pm (p)\\ \partial_2 R^\pm (p) \\ 0                                                                                                                                                                                                                                                                                                                                                                                     \end{bmatrix}+o((\lambda_i^\pm)^2)\right)
\end{align}
As $\Sigma_i$ is connected the forces $\FF_i^\pm$ must balance--that is $\FF_i^-=\FF_i^+$.  In particular, $\FF_i^+\cdot \eE_3=\FF_i^-\cdot \eE_3$ so, as $\sin\theta_i=o(1)$,
\begin{equation*}
2\pi \lambda_i^- \cos \theta_i+o((\lambda^-_i)^2)=2\pi \lambda^+_i \cos \theta_i +o((\lambda^+_i)^2)
\end{equation*}
In particular, $\lambda^+_i=\lambda^-_i+o((\lambda_i^-)^2)$.  Item \eqref{NLitem3} of Proposition \ref{NeckLocationProp}, implies that  $\lambda_i^-=O(\theta_i)$.  Hence, $\FF_i^-\cdot \eE_1=\FF_i^+\cdot \eE_1$ and $\sin \theta_i=\theta_i+o(\theta_i^2)$ give
\begin{equation*}
 2\pi  \theta_i +O(\theta_i^2)=-2\pi \theta_i +O(\theta_i^2).
\end{equation*}
As $\theta_i>0$ this is impossible so $p\not \in \Omega^-\cap \Omega^+$.

Set $d_i=\dist(p_i,\partial \Sigma_i)$ and $d_i^\pm=\dist(p_i, \sigma_i^\pm)$ so $d_i=\min\set{d_i^-,d_i^+}$ and $d_i\to 0$ because $p\not \in \Omega^-\cap \Omega^+$.  Up to a passing to a subsequence and reflecting across $P$, we may take $d_i^+\geq d_i^->0$ and  $\frac{d_i^-}{d_i^+}=\mu_i>0$ where $\mu_i\to \mu\in [0,1]$.
  Consider now $\hat{\Sigma}_i^\pm=(d_i^\pm)^{-1} \left( \Sigma_i^\pm -p\right)$.  As $d_i^-\to 0$ we have that $\hat{\Sigma}^-$ tends to a half-plane $H^-\subset P$ that contains $0$ and has $\dist(0, \partial H^-)=1$.   Similarly, if $d_i^+\to 0$ then $\hat{\Sigma}^+_i$ converges to a half-plane $H^+\subset P$ while if $d_i^+\to d^+>0$ then $\hat{\Sigma}^+_i$ converges to $\hat{\Omega}^+$ a bounded convex domain containing $D_1(0)$.  In either case, Theorem \ref{HarmRescale} applies and allows us to compute the force of $\hat{\Sigma}^\pm_i$ to be as in \eqref{RotForceEqn}.  However, in order to balance
$d^+_i \FF_i^+=d^-_i\FF_i^-$, equivalently, $\FF_i^+=\mu_i \FF_i^-$.   

The third component of the force is balanced when
\begin{equation*}
2\pi \mu_i\lambda_i^- \cos \theta_i+\mu_i o((\lambda^-_i)^2)=2\pi  \lambda^+_i \cos \theta_i +o((\lambda^+_i)^2).
\end{equation*}
As $\mu_i>0$, this implies that if
we write $\lambda_i=\lambda_i^-$ then $\lambda_i^+=\mu_i \lambda_i + \mu_i o( \lambda_i^2 )$. 
By considering the second component of the forces,
\begin{equation*}
- \mu_i^2 \lambda_i^2 \partial_2 R^+(0)= \mu_i \lambda_i^2 \partial_2 R^-(0)+\mu_i o( \lambda_i^2).
\end{equation*}
As $\mu_i>0$,  this occurs only when
\begin{equation}\label{e2BalCond}
-\mu \partial_2 R^+(0)=\partial_2 R^-(0).
\end{equation}
Similarly, as $\mu_i>0$ the first component of the force is balanced when
\begin{equation*}
  \lambda_i \left( 2\pi  \sin \theta_i -  \mu_i \lambda_i \partial_1 R^+(0)\right) = \lambda_i\left(
 -2\pi  \sin \theta_i +\lambda_i \partial_1 R^-(0) + \right)+ o(\lambda_i \theta_i)+\ o(\lambda_i^2).
\end{equation*}
In particular, one has $\theta_i=\gamma \lambda_i+o(\lambda_i)$ with $\gamma<\infty$ and
\begin{equation}\label{e1BalCond}
 4\pi \gamma  =
\mu  \partial_1 R^+(0) +  \partial_1 R^-(0).
\end{equation}

For $R$, the Robin's function of the half-space $p\in \set{\xX: (\yY-\yY_0)\cdot \vV>0, |\vV|=1}$, 
\begin{equation} \label{NablaRob}
\nabla R(p)=\frac{1}{2\pi L}  \vV,
\end{equation}
where here $L=|(p-\yY_0)\cdot \vV|$.
Suppose that $\mu= 0$.  In this case, it follows from \eqref{e2BalCond}, \eqref{e1BalCond} and \eqref{NablaRob} with $p=0$ that $H^-=\set{\yY: (\yY-\yY_0^-)\cdot \vV^->0}$ where  $\vV^-= \eE_1$ and $\yY_0^-=- \eE_1$. This implies that $T_p\Omega^-$ is parallel to $\ell_2$ and  since $\Omega^-$ is convex separates $\Omega^-$ from $\ell_2$.  Hence, $\dist(p, \ell_2)=\dist(\Omega^-, \ell_2)=\dist(\Omega^-\cap \Omega^+,\ell_2)$ as claimed.
Suppose now that $\mu>0$.  In this case we have that $d_i^+\to 0$ and so we consider $0\in H^\pm=\set{\yY: (\yY-\yY_0^\pm) \cdot \vV^\pm >0}$.  As $\dist(0,\partial H^\pm)=1$ we  have $\yY_0^\pm=-\vV^\pm$ and $L^\pm=1$.
It follows from \eqref{e2BalCond}, \eqref{e1BalCond} and \eqref{NablaRob} that
\begin{align} \label{MainBalCond}
 \vV^-+\mu \vV^+=4 \pi \gamma \eE_1.
\end{align}
By Item \eqref{Factsitem4} of Proposition \ref{FactsProp}, $\vV^-\neq -\vV^+$.  On the other hand, if $\vV^-=\vV^+$ then \eqref{MainBalCond} implies that $\vV^\pm=\eE_1$ and hence $p$ is as claimed.  Thus, we may assume that $\partial H^-\cap \partial H^+$ consists of a single point $Q$ and that $\mu<1$.  As $(\vV^-+\mu \vV^+)\cdot (\vV^--\mu \vV^+)=1-\mu^2$, it follows from \eqref{MainBalCond} and $\mu<1$ that $ 2 \vV^-\cdot \eE_1=\frac{1-\mu^2}{4\pi \gamma}+4 \pi \gamma>0$.  Furthermore, \eqref{MainBalCond} and $\mu<1$ imply that $\vV^-\cdot \eE_2<\vV^+\cdot \eE_2$.  As $0\in H^-\cap H^+$, this together with $\vV^-\cdot \eE^->0$ imply that $0\in H^-\cap H^+\subset \set{x_1>x_1(Q)}$. As $\Omega^-\cap \Omega^+$ is convex, this implies that $T_p\Omega^-\cup T_p\Omega^+$ separates $\Omega^-\cap \Omega^+$ from $\ell_2$ and hence $\dist(p,\ell_2)=\dist(\Omega^-\cap \Omega^+)$ as claimed.

Item \eqref{MTitem4} follows by taking $\alpha_i=d_i^{-1}$.
\end{proof}

\section{Constructions}
Let us construct the sequences of Theorems \ref{ZigZagThm} and \ref{FinRiemThm}.  We begin with the zig-zag sequence.
\begin{proof}
Fix $p_0=(10,0,0)\in H$ so $D_5(p_0)\subset H$.  We take $\Omega_0=D_{5}(p_0)\cap \set{x_1>10}$ the half-disk.  By ``rounding off the corners'' of $\Omega_0$ we may obtain a domain $\Omega$ with the following properties
\begin{enumerate}
 \item $\partial \Omega$ is of class $C^\infty$
 \item $\Omega$ is symmetric with respect to reflection across $\ell_1$
 \item $\Omega\subset \Omega_0$ and $\Omega_0\backslash \Omega\subset D_{1}(p_1)\cup D_{1}(p_2)$ where $p_1=(10,5,0)$ and $p_2=(10,-5,0)$.
 \item $\partial \Omega\cap \set{x_1=10}=L$ where $L$ is the line segment $\set{(10, t,0): -4\leq t \leq 4}$.
\end{enumerate}
Set $\Omega^+_\theta=R_\theta(\Omega)$ and $\Omega^-_\theta=R_{-\theta}(\Omega)$. For $\theta$ small the area minimizing surface spanning  $\partial\Omega^+_\theta\cup \partial \Omega^-_\theta$ is an annulus. Hence, by Theorem \ref{MWThm}, there is a unique embedded unstable minimal annulus $\Sigma_\theta$ with $\partial \Sigma_\theta=\partial\Omega^+_\theta\cup \partial \Omega^-_\theta$. The uniqueness of $\Sigma_\theta$ symmetry of $\partial \Sigma_\theta$  with respect to the plane $\set{x_2=0}$ imply that  $\Sigma_\theta$ is also symmetric with respect to this plane  

Consider the surface $\Sigma_\theta^1$ obtained by extending $\Sigma_\theta$ by Schwarz reflecting it across $L_\theta$ and across $L_{-\theta}$.  That is, $\Sigma_\theta^1\backslash \Sigma_\theta$ consists of two copies of $\Sigma_\theta$ obtained by rotating by $180^\circ$ around $L_\theta$ and $L_{-\theta}$.  If we let $P^\perp_+=\set{x_1=15}$ then $\partial \Sigma_\theta$ lies on one side of $P^\perp_+$ and so by the convex hull property so does $\Sigma_\theta$.  $P^\perp_-=\set{x_1=5}$ is obtained from $P^\perp_-$ by rotating around either $L_\theta$ or $L_{-\theta}$ and so $\Sigma_\theta^1$ lies in the slab $S$ between $P_-^\perp$ and $P^\perp_+$.   As $S\backslash \left( H_\theta\cup H_{-\theta}\right)$ consists of three distinct components,  there are three components of $\Sigma_\theta^1\backslash \left( H_\theta\cup H_{-\theta}\right)$ each a copy of $\Sigma_\theta$.  Hence, $\Sigma_\theta^1$ is embedded. The reflection procedure can be iterated and so produce an embedded minimal planar domain $\Sigma^\infty_\theta$ lying in $S$ and so that $\partial \Sigma^\infty_\theta\cap \Pi^{-1} (D_1(p_0))=\emptyset$.

By Theorem \ref{MainThm}, there exist $\theta_i\to 0$ so $\overline{\Sigma}_{\theta_i}$ converges to $\overline{\Omega}$ in the Hausdorff sense.  Moreover, Item \eqref{MTitem2} of  Theorem \ref{MainThm} and the symmetry of $\Sigma_{\theta_i}$ imply that the curvature concentrates at $p_0$. Let $\alpha_i$ be the sequence given by Item \eqref{MTitem4} of Theorem \ref{MainThm} and let $\Sigma_i=\alpha_i(\Sigma_{\theta_i}^\infty -p_0)\cap B_{\alpha_i}$ so $\partial \Sigma_i\subset B_{\alpha_i}$.
If $\mathcal{S}^\pm$ are the lines through $p^\pm=(\pm 1,0,0)$ perpendicular to $P$, then Item \eqref{MTitem3} Theorem \ref{MainThm} and the symmetry across $L_{\pm\theta}$ imply that ${\Sigma}_i$ converge, in $C^2_{loc}(\Real^3\backslash \mathcal{S})$, to a foliation, $\mathcal{L}$, of $\Real^3$ by planes parallel to $P$.  Here $\mathcal{S}=\mathcal{S}^-\cup \mathcal{S}^+$.  Indeed,  for $\epsilon>0$ and $i$ sufficiently large each component of ${\Sigma}_i\backslash T_\epsilon(\mathcal{S})$ is a graph over $P$.  

Finally,  fix $R>1$ and $\frac{1}{2}>\delta>0$ and suppose $p_i^\pm\in \Sigma_i\cap B_R$ satisfy $|x_3(p_i^+)-x_3(p_i^-)|>\delta$.  Let $\Sigma_i^\pm$ be the component of $\Sigma_i\cap B_{2R}\backslash T_{\delta/4}(\mathcal{S})$ containing $p_i^\pm$. For $i$ large enough each of these components are graphs, in particular we can sense of a component lying between $\Sigma_i^-$ and $\Sigma_i^+$. As $\Sigma^\pm_i$ converge to subsets of planes parallel to $P$,  $\Sigma_i^\pm$ meets both components of $T_{\delta/2}(\mathcal{S}$ and indeed this is true for each component $\Gamma$ between $\Sigma_i^-$ and $\Sigma_i^+$. Hence, $p_i^\pm$ can be connected in $B_{2R}\cap \Sigma_i$. As $|x_3(p^+_i)-x_3(p^-_i)|>\delta$, for each $n$ there is an $i$ so that there are at least $n$ components between $\Sigma_i^-$ and $\Sigma_i^+$.  If $\Gamma$ is one of these components and $\gamma$ a curve connecting $p_i^-$ to $p_i^+$ in $\Sigma_i$ the $\gamma$ must connect each component of $T_{\delta/2}(\mathcal{S})$ in $\Gamma$. In particular, $\gamma$ has length at least $n/2$. 
\end{proof}

A slight modification of the above construction gives Theorem \ref{FinRiemThm}.
\begin{proof}
We begin by taking $\Omega^+=\Omega$ where $\Omega$ is defined in the proof of Theorem \ref{ZigZagThm}.  If $\Omega'\subset P$ is the domain obtained by reflecting $\Omega$ across $L$ then we set $\Omega^-=\Omega'+2\eE_1$. In particular, $\dist(\Omega^-\cap \Omega^+,\ell_2)=\dist(p_0,\ell_2)$ and $D_2(p_0)\subset \Omega^-$.  For $\theta$ sufficiently small the least area surface spanning $\sigma_{\pm \theta}^\pm=\partial \Omega_{\pm \theta}^\pm$ is a stable minimal annulus. Hence, by Theorem \ref{MWThm} there is a unique unstable embedded minimal annulus, $\Sigma_\theta$ with $\partial \Sigma_\theta=\partial \sigma_{-\theta}^-\cup \sigma_\theta^+$ which is symmetric with respect to the plane $\set{x_2=0}$.  We let $\Sigma_\theta^1$ be the surface obtained extending $\Sigma_\theta$ by Schwarz reflecting across $L_\theta$. As $H_\theta$ separates $\Sigma_\theta^1$ into two components,  each a  copy of $\Sigma_\theta$, $\Sigma_\theta^1$ is embedded.
Notice that for $\theta$ small enough, $\Pi^{-1}(D_1(p_0))\cap \partial \Sigma_\theta^1=\emptyset$.

By Item \eqref{MTitem1} of Theorem \ref{MainThm}, there exist $\theta_i\to 0$ so $\overline{\Sigma}_{\theta_i}$ converges to $\overline{\Omega^-\cup \Omega^+}$ in the Hausdorff sense.  Moreover, Item \eqref{MTitem2} of Theorem \ref{MainThm} and the symmetry of $\Sigma_{\theta_i}$ imply that the curvature concentrates at $p_0$. Let $\alpha_i$ be the sequence given by Item \eqref{MTitem4} of Theorem \ref{MainThm} and let  $\Sigma_i=\alpha_i(\Sigma_{\theta_i}^\infty-p_0)\cap B_{\alpha_i}$ so $\partial {\Sigma}_i\subset B_{\alpha_i}$.
If $p^\pm=(\pm 1,0,0)$ then Item \eqref{MTitem3} of Theorem \ref{MainThm} and the symmetries of ${\Sigma}_i$ imply that ${\Sigma}_i$ converges to $P\backslash \set{p^-,p^+}$ in $C^2_{loc}(\Real^3\backslash\set{p_-,p_+})$ with multiplicity three. 
\end{proof}
\ifIMRN
\bibliographystyle{plainnat}
\else
\bibliographystyle{amsplain}
\fi
\bibliography{Biblio}

\end{document}